\def\N{{{\Bbb N}}}
\def\Z{{{\Bbb Z}}}
\def\T{{{\Bbb T}}}
\def\R{{\Bbb R}}
\def\l{{\lambda }}
\def\D{{\Delta }}
\def\vp{{\varphi}}
\def\g{{\gamma }}
\def\){\right)}
\def\({\left(}
\def\I{{\mathcal{I} }}
\def\Np{{\mathcal{N} }}
\numberwithin{equation}{section}
\newtheorem{lemma}{Lemma}[section]
\newtheorem{theorem}{Theorem}[section]
\newtheorem{remark}{Remark}[section]
\def\G{{\Gamma }}
\def\n{\bm{n}}
\def\x{\bm{x}}
\begin{document}

\title[Asymptotics of the Lebesgue constant]{Asymptotics of the Lebesgue constants for bivariate approximation processes}

\author[Yurii Kolomoitsev]{Yurii Kolomoitsev$^{\text{a, b, *, 1}}$}
\address{Universit\"at zu L\"ubeck, Institut f\"ur Mathematik,
Ratzeburger Allee 160, 23562 L\"ubeck, Germany}
\email{kolomoitsev@math.uni-luebeck.de}

\author[Tetiana Lomako]{Tetiana Lomako$^{\text{b}}$}
\address{Institute of Applied Mathematics and Mechanics of NAS of Ukraine,
General Batyuk Str. 19, Slov’yans’k, Donetsk region, Ukraine, 84100}
\email{tlomako@yandex.ua}

\thanks{$^\text{a}$Universit\"at zu L\"ubeck,
Institut f\"ur Mathematik,
Ratzeburger Allee 160,
23562 L\"ubeck, Germany}

\thanks{$^\text{b}$Institute of Applied Mathematics and Mechanics of NAS of Ukraine,
General Batyuk Str. 19, Slov’yans’k, Donetsk region, Ukraine, 84100}


\thanks{$^1$Supported by DFG project KO 5804/1-1.}

\thanks{$^*$Corresponding author:  kolomoitsev@math.uni-luebeck.de}


\date{\today}
\subjclass[2010]{41A05, 42B05, 42B08, 65D05} \keywords{Lebesgue constants, asymptotic formula, anisotropy,  Dirichlet kernel, interpolation, Lissajous-Chebyshev nodes.}

\begin{abstract}
In this paper asymptotic formulas are given for the Lebesgue constants generated by three special approximation processes related to the $\ell_1$-partial sums of Fourier series. In particular, we consider the Lagrange interpolation polynomials based on the  Lissajous-Chebyshev node points, the partial sums of the Fourier series generated by the anisotropically dilated rhombus, and the corresponding discrete partial sums.
\end{abstract}

\maketitle

\section{Introduction}

Let $D\subset \R^d$ be a compact set with non-empty interior. Denote by $C(D)$ the set of all real-valued continuous functions on $D$ equipped with the norm $\|f\|=\sup_{\x\in D}|f(\x)|$.
By $\Pi_{\n}$, $\n=(n_1,\dots,n_d)\in \N^d$, we denote some abstract space of polynomial functions in $C(D)$ (e.g., algebraic or trigonometric polynomials of degree $\n$ in some sense).

Consider a projection operator $P_{\n}\,:\, C(D)\mapsto \Pi_{\n}$. The norm of this operator $\Lambda_{\n}=\|P_{\n}\|=\sup_{\|f\|\le 1}\|P_{\n}(f)\|$ is called the Lebesgue constant of $P_{\n}$. In view of the well-known inequality
$$
\|f-P_{\n}(f)\|\le (1+\Lambda_{\n})\inf_{P\in \Pi_{\n}}\|f-P\|,
$$
the Lebesgue constant is an essential tool for the investigation of approximation properties of the operator $P_{\n}$.
We are interested in studying the Lebesgue constants in the case $P_{\n}$ is an interpolation process or a partial sum of the Fourier series.

For univariate interpolation processes, the behaviour of the Lebesgue constants is well studied for the most classical sets of nodes.
For example, if $P_n$ is the Lagrange interpolation polynomial with the Chebyshev nodes on $[-1,1]$, then
$$
\Lambda_n=\frac2{\pi}\log n+\frac{2}{\pi}\(\gamma+\log\frac8{\pi}\)+\mathcal{O}\(\frac1{n^2}\),
$$
where $\gamma$ denotes Euler’s constant, see, e.g.,~\cite[p.~65]{MM}. See also \cite{Sm} for related asymptotics of the Lebesgue constant of interpolation processes based on other sets of nodes.

In multivariate spaces, much less is known except the trivial case of the Lagrange interpolation based on the tensor product grid.
One of the main problems is the choice of a suitable set of nodes.
At the present time, promising Lagrange interpolation polynomials on $[-1,1]^2$ are constructed by means of the so-called Padua points ${\rm Pad}_n$,  see, e.g.,~\cite{CMV}.
Note that these polynomials have a series of favourite properties, one of which is that the Padua points can be characterized as a set of node points of a particular Lissajous curve (see~\cite{BDeMVXu2006}).
The Lebesgue constant of the Lagrange interpolation at the Padua points was studied in~\cite{BDeMVXu2006}.
In particular, it was shown that it grows like $\mathcal{O}(\log^2 n)$, where $n$ is total degree of the corresponding polynomial space $\Pi_n$.
Similar result for the Lebesgue constat of the Lagrange interpolation based on the Xu points (see~\cite{Xu1996}) was early obtained  in~\cite{BDeMV2006}. The estimates from below for different Lebesgue constants were investigated in~\cite{DEKL, Su, SV}. The corresponding results  imply that for the mentioned interpolation processes $\Lambda_n\ge c\log^2 n$.

In this paper, we study the Lebesgue constant for the interpolation processes based on the so-called
Lissajous-Chebyshev nodes ${\rm LC}_{\n}$, which represent an anisotropic analogue of the Padua points in the sense that ${\rm LC}_{n,n+1}={\rm Pad}_{n}$, see~\cite{E}. Different properties of the polynomial interpolation on the Lissajous-Chebyshev nodes have been recently obtained in~\cite{DE, DEKL, E, ErbKaethnerAhlborgBuzug2015, KLP}. Note that the study of interpolation on such nodes is motivated by applications in a novel medical imaging technology called Magnetic Particle Imaging (see, e.g.,~\cite{KB}).  Sharp estimates of the Lebesgue constant of the interpolation processes on ${\rm LC}_{\n}$ were recently established in~\cite{DEKL}. In particular, it was shown that in the multivariate case
$$
c_1(d)\prod_{j=1}^d \log n_j\le \Lambda_{\n}^{\rm LC} \le c_2(d)\prod_{j=1}^d \log n_j.
$$

The main goal of this paper is to find an asymptotic formula for $\Lambda_{\n}^{\rm LC}$ in the two dimensional case.
Our approach is similar to one given in~\cite{DEKL}. First, we investigate the Lebesgue constants for the $\ell_1$-partial sums of Fourier series and then establish relationships between the corresponding Lebesgue constants. For our purposes, we essentially extend a series of equalities and estimates obtained in~\cite{Ku} and~\cite{Ku2} for the Dirichlet kernels with the frequencies in the rhombus $\{(k_1,k_2):{|k_1|}/{n_1}+{|k_2|}/{n_2}\le 1\,\}$ and apply new methods developed recently in the papers~\cite{KL} and~\cite{KLi}.

It is worth noting that the study of the Lebesgue constants for the partial sums of Fourier series has its own great interest. There are many works dedicated to this topic (see, e.g., surveys~\cite{L} and~\cite{GaLi}). Our investigation concerns the Lebesgue constants for polyhedral partial sums of Fourier series. In particular, we are interested in the so-called triangular partial sums, which is also called the $\ell_1$-partial sums. The estimates for the Lebesgue constants of such partial sums were obtained, e.g., in~\cite{Be, DEKL, KLi, KL, Ku, NP, SV}. Different asymptotic formulas for the Lebesge constant in the case of isotropic dilations of a polyhedra were given in~\cite{KS,NP, P2, S}; for the anisotropic case see~\cite{Ku} and~\cite{Ku2} as well as the recent work~\cite{KLi}, in which an asymptotic formula is given for the Lebesgue constants generated by the anisotropically dilated $d$-dimensional simplex.


%
%
%


Throughout the paper, we use the following notation: $\T^d=[-\pi,\pi)^d$, $\T=\T^1$, and
$$
\|f\|_{L(\Omega)}=\int_{\Omega}|f(\x)|d\x.
$$
For any $m,n\in \N$, we denote $x_\mu=x_{\mu}^{(m)}={\pi \mu}/{m}$, $y_\nu=y_{\nu}^{(n)}={\pi \nu}/{n}$, and
$$
\|(a_{\mu,\nu})_{\mu,\nu}\|_{\ell^{mn}}=\frac1{mn}\sum_{\mu=0}^{m-1}\sum_{\nu=0}^{n-1}|a_{\mu,\nu}|.
$$
The floor and the fractional part functions are defined, as usual, by
$[x] =\max(m\in \Z\,:\, m\le x)$  \ {and}\  $\{x\}=x-[x],$ correspondingly.
Also, we use the notation
$\, F \lesssim G,$
with $F,G\ge 0$, for the
estimate
$\, F \le c\, G,$ where $\, c$ is an absolute positive constant.


\section{Main results}
\subsection{Polynomial interpolation on Lissajous-Chebyschev nodes}\label{secLC}


Let $\Np^2=\{(m,n)\in \N^2\,:\,  m\,\, \text{and}\,\, n\,\, \text{are relatively prime}\}$.
In what follows, for simplicity, we use the notation
\begin{equation*}
  u_k=\cos\(\frac{k\pi}{m}\),\quad v_l=\cos\(\frac{l\pi}{n}\), \quad k=0,\dots,m, \quad l=0,\dots,n,
\end{equation*}
to abbreviate the Chebyshev-Gauss-Lobatto points.
For each $(m,n)\in \Np^2$, the Lissajous-Chebyshev node points of the (degenerate) Lissajous curve $\g_{mn}(t)=\(\cos(nt), \cos(mt)\)$ are defined by
$$
{\rm LC}_{mn}=\left\{\g_{mn}\(\frac{\pi k}{mn}\),\,k=0,\dots,mn\right\}.
$$
%
%
%
\begin{figure*}[ht]
\includegraphics[scale=0.3, clip]{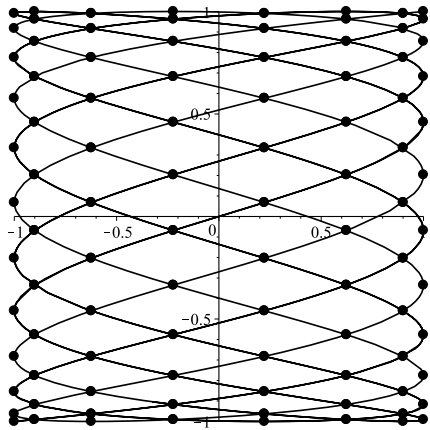}
	\caption{Illustration of the degenerate Lissajous curve $\gamma_{7,23}$ and the set ${\rm LC}_{7,23}$.}
\end{figure*}

\noindent Note that (see~\cite{E}) for every $({m,n})\in \Np^2$,
the set ${\rm LC}_{mn}$ contains  $(m+1)(n+1)/2$ distinct points and can be represented in the following form
  $$
  {\rm LC}_{mn}=\left\{(u_i,v_j): (i,j)\in \I_{mn} \right\},
  $$
where
   $$
  \I_{mn}=\left\{(i,j)\in \Z_+^2: i=0,\dots,m,\, j=0,\dots,n,\, i+j=0\pmod 2\right\}.
  $$


Let $C_n$ be the normalized Chebyshev polynomial defined by
$$
{C}_n(u)=\left\{
               \begin{array}{ll}
                 1, & \hbox{$n=0$,} \\
                 \sqrt{2}\cos(n \arccos u), & \hbox{$n\neq 0$.}
               \end{array}
             \right.
$$
A proper set of polynomials for interpolation on ${\rm LC}_{mn}$ is given by
\begin{equation*}
  \Pi_{mn}={\rm span}\{C_{k}(u)C_{l}(v)\,:\, (k,l)\in \Gamma_{mn}\},
\end{equation*}
where
$$
\G_{mn}=\left\{(i,j)\in \Z_+^2 : \frac{i}{m}+\frac{j}{n}<1\right\} \cup \{(0,n)\}.
$$



It was proved in~\cite{E} that
for every continuous function $f\,:\,[-1,1]^2\to \R$, the unique solution of the interpolation problem
\begin{equation*}
  \mathcal{P}_{mn}(f)(u_k,v_l)=f(u_k,v_l),\quad  (u_k,v_l)\in {\rm LC}_{mn},
\end{equation*}
in the space $\Pi_{nm}$ is given by the polynomial
\begin{equation*}
  \mathcal{P}_{mn}(f)(u,v)=\sum_{(k,l)\in \I_{mn}}f(u_k,v_l)\vp_{mn}(u,v;u_k,v_l),
\end{equation*}
where
  \begin{equation*}
  \begin{split}
    \vp_{mn}(u,v;u_k,v_l)&=\l_{kl}\bigg(\sum_{(i,j)\in \Gamma_{mn}} {C}_{i}(u_k){C}_{j}(v_l){C}_{i}(u){C}_{j}(v)-\frac12 {C}_{n}(v_l){C}_{n}(v)\bigg)
  \end{split}
  \end{equation*}
  and
  \begin{equation*}
  \l_{kl}:=\left\{
        \begin{array}{ll}
          \displaystyle1/{(2mn)}, & \hbox{$(u_k,v_l)$ is a vertex point of $[-1,1]^2$,} \\
          \displaystyle1/(mn), & \hbox{$(u_k,v_l)$ is an edge point of $[-1,1]^2$,} \\
          \displaystyle2/(mn), & \hbox{$(u_k,v_l)$ is an interior point of $[-1,1]^2$.}
        \end{array}
      \right.
\end{equation*}

Consider the Lebesgue constant of the above interpolation problem. Note that it can also be defined by
$$
\Lambda_{mn}^{\rm LC}:=\max_{(u,v)\in [-1,1]^2}\sum_{(k,l)\in \I_{mn}}|\vp_{mn}(u,v;u_k,v_l)|.
$$



The following theorem is our main result.

\begin{theorem}\label{th2}
  Let $m,n\in \N$ be such that $n=\l m+p$, where $\l,p\in \N$, $1\le p<m$. Then
  \begin{equation*}
    \Lambda_{mn}^{\rm LC}= \frac{4}{\pi^2}(2\log m\log n-\log^2 m)+\mathcal{O}\(\log n+p \log \frac{m}{p}\).
  \end{equation*}
\end{theorem}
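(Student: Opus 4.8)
The plan is to carry out the two-step programme announced in the introduction. First I would transfer the problem from interpolation to the $\ell_1$-partial sums of Fourier series by means of the substitution $u=\cos\t$, $v=\cos\psi$, under which the node $u_k$ becomes the angle $\t_k=k\pi/m$ and $v_l$ becomes $\psi_l=l\pi/n$. Since $C_i(\cos\t)=\sqrt2\cos(i\t)$ for $i\neq0$, the product-to-sum identities turn each term $C_i(u_k)C_j(v_l)C_i(u)C_j(v)$ into a combination of $\cos(i(\t\pm\t_k))\cos(j(\psi\pm\psi_l))$, and summation over $(i,j)\in\G_{mn}$ reassembles the anisotropic Dirichlet kernel $\mathcal{D}_{mn}$ associated with the rhombus $\{(k_1,k_2):|k_1|/m+|k_2|/n\le1\}$, now evaluated at the four translated arguments $(\t\pm\t_k,\psi\pm\psi_l)$. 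The exceptional summand $-\tfrac12 C_n(v_l)C_n(v)$ and the vertex/edge weights $\l_{kl}$ should contribute only $\mathcal{O}(\log n)$. In this way the outer sum defining $\Lambda_{mn}^{\rm LC}$ becomes a quadrature sum for $\frac1{(2\pi)^2}\int_{\T^2}|\mathcal{D}_{mn}|$; comparing the two through the discrete $\ell_1$-partial sum as an intermediate object is precisely the device linking the three Lebesgue constants of the title, and I expect it to cost at most $\mathcal{O}(\log n)$.

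It then remains to determine the asymptotics of $\frac1{(2\pi)^2}\int_{\T^2}|\mathcal{D}_{mn}(x,y)|\,dx\,dy$. Here I would start from an exact one-dimensional reduction extending the identities of Kuznetsova~\cite{Ku,Ku2}: summing the geometric series over $k_2$ for fixed $k_1$ writes $\mathcal{D}_{mn}(x,y)=\sum_{|k_1|\le m}e^{\i k_1 x}\,D_{N(k_1)}(y)$, where $D_N$ is the univariate Dirichlet kernel and $N(k_1)=\lfloor n(1-|k_1|/m)\rfloor$. Using $n=\l m+p$ gives the explicit shape $N(k_1)=n-\l|k_1|-\lceil p|k_1|/m\rceil$, so that all the arithmetic is concentrated in the fractional parts $\{pk_1/m\}$. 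Replacing each inner $L^1$-norm by its leading value $\tfrac4{\pi^2}\log N(k_1)$ and integrating the outer Dirichlet factor in $x$ produces the second logarithmic scale; the bookkeeping is cleanest after rewriting the target as $\tfrac4{\pi^2}\bigl(\log^2 m+2\log m\log(n/m)\bigr)$, which separates an isotropic core of size $\log^2 m$ from an anisotropic correction governed by the aspect ratio $n/m\asymp\l$, and this is the organisation I would impose on the integral estimates.

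The delicate step, and the one I expect to be the main obstacle, is the passage from the floor and ceiling quantities $N(k_1)$ to their smooth surrogates $n(1-|k_1|/m)$ and the summation of the resulting errors against the slowly varying logarithmic weights. This is an edge effect along the slanted faces of the rhombus, whose slope is the rational $-n/m$: the coprimality of $m$ and $n$ (equivalently of $m$ and $p$) forces the fractional parts $\{pk_1/m\}$ to run through a complete residue system and hence to equidistribute, but the accumulation of the near-edge contributions, where the inner kernels are largest and the floor corrections most erratic, is exactly what generates the term $p\log(m/p)$. Extracting this precise form, rather than a cruder bound, will require the sharpened Kuznetsova-type identities together with the discrete summation techniques of~\cite{KL,KLi}, applied uniformly in $\l$. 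Collecting this with the $\mathcal{O}(\log n)$ errors from the interpolation-to-Fourier reduction, the exceptional $C_n$-term and the weights $\l_{kl}$ yields the stated remainder $\mathcal{O}\bigl(\log n+p\log(m/p)\bigr)$.
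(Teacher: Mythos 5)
Your first reduction --- the substitution $u=\cos x$, $v=\cos y$, the reassembly of the rhombus kernel $\mathcal{D}_{mn}$ at the four shifted arguments, and the $\mathcal{O}(\log n)$ cost of the exceptional $C_n$-term and the weights $\lambda_{kl}$ --- is sound, and it is exactly how the paper's proof begins (see \eqref{23}). The fatal step is the next one: you claim that the resulting node-sum is a quadrature approximation of $\frac{1}{4\pi^2}\int_{\T^2}|\mathcal{D}_{mn}|$ at a cost of only $\mathcal{O}(\log n)$. This is false, and the failure is precisely the point of the whole paper. The kernel oscillates on exactly the scale of the sampling grid $x_\mu=\pi\mu/m$, $y_\nu=\pi\nu/n$, so the discrete means never approach the integral, not even to leading order: by Theorem~\ref{th1} the integral $\frac{1}{4\pi^2}\|D_{mn}\|_{L(\T^2)}$ grows like $\frac{16}{\pi^4}(2\log m\log n-\log^2 m)$, by Theorem~\ref{th1+} the discrete Lebesgue constant grows like $\frac{2}{\pi^2}(2\log m\log n-\log^2 m)$, and the interpolation constant of Theorem~\ref{th2} grows like $\frac{4}{\pi^2}(2\log m\log n-\log^2 m)$; these three constants are pairwise distinct, whereas your claim would force them to coincide. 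Put differently, the object you propose to compute in your second stage has leading constant $\frac{16}{\pi^4}$, not the $\frac{4}{\pi^2}$ you assert, so even a flawless execution of your one-dimensional reduction and edge-effect analysis would land on an answer that is wrong by the factor $\pi^2/4$. This is the bivariate form of a classical univariate discrepancy: interpolation at equidistant nodes has Lebesgue constant $\sim\frac{2}{\pi}\log n$, while Fourier partial sums have $\sim\frac{4}{\pi^2}\log n$.

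What is actually needed, and what the paper does, is to evaluate the discrete sums themselves and never pass to the integral: the decomposition $D_{mn}=S_{mn}-F_{mn}+R_{mn}$ of Lemma~\ref{le1}; the Marcinkiewicz--Zygmund inequality (Lemma~\ref{lemz}) together with Lemma~\ref{le3} to absorb $F_{mn}$ and the boundary kernel $d_{mn}$ into the error $\mathcal{O}(\log n+p\log\frac{m}{p})$ --- here your intuition that the fractional parts $\{pk/m\}$ along the slanted faces of the rhombus generate the term $p\log\frac{m}{p}$ is correct in spirit, this is Lemma~\ref{le3}; and finally the Kuznetsova-type \emph{discrete} asymptotics \eqref{leKu3}--\eqref{leKu4} for the sums of $S_{mn}$ over the parity-restricted index set $B_m$. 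That parity restriction (the Lissajous--Chebyshev nodes satisfy $i+j\equiv 0 \pmod 2$) is a further point your sketch never addresses: it is the reason the leading term carries $\big|\sin\frac{mx+ny}{2}\sin\frac{mx-ny}{2}\big|$ rather than the full $\Phi_{mn}$ of \eqref{phi}, and hence, after the factor $2$, the supremum over $(x,y)$, and a matching lower bound obtained by evaluating at $(x,y)=(\pi/m,0)$, the constant $\frac{4}{\pi^2}$.
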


In particular, Theorem~\ref{th2} implies that for the interpolation based on the Padua points ${\rm Pad}_n$, we have
$$
\Lambda_{n}^{\rm Pad}= \frac{4}{\pi^2}\log^2 n+\mathcal{O}(\log n).
$$
Similar formula holds for $\Lambda_{mn}^{\rm LC}$ if $n\sim m$ (i.e., $\l$ and $p$ are certain fixed numbers). At the same time, if $\frac {\log n}{\log m}\to \infty$ as $m\to \infty$, then
$$
\Lambda_{mn}^{\rm LC}= \frac{8}{\pi^2}\log n\log m+\mathcal{O}(\log n+\log^2 m).
$$
This formula follows from the proof of Theorem~\ref{th2} given in the last section and inequality~\eqref{le2.2}.

\subsection{$\ell_1$-partial sums of Fourier series}

Let $f$ be an integrable  $2\pi$-periodic in each variable function. The Fourier series of $f$ is given by
$$
f(x,y)\sim\sum_{k,l\in\Z} c_{kl}(f)e^{i(kx+ly)},
$$
where
$$
c_{kl}(f)=\frac1{4\pi^2}\int_{\T^2} f(x,y)e^{-i(kx+ly)}dxdy.
$$
In the multivariate case, there are many ways to define the partial sums of Fourier series (see, e.g.,~\cite{L}). In this paper, we consider the so called $\ell_1$-partial sums:
\begin{equation}\label{ps}
  \mathcal{S}_{mn}(f)(x,y)=\sum_{{|k|}/{m}+{|l|}/{n}\le 1}c_{kl}(f)e^{i(kx+ly)}.
\end{equation}
The Lebesgue constant of this partial sums is denoted by
$$
\mathcal{L}_{mn}:=\|\mathcal{S}_{mn}\|_{C(\T^2)\to C(\T^2)}=\sup_{\|f\|\le 1}|\mathcal{S}_{mn}(f)(x,y)|.
$$
It is well known that
\begin{equation}\label{zv}
  \mathcal{L}_{mn}=\frac1{4\pi^2} \int_{\T^2}|D_{mn}(x,y)|dxdy,
\end{equation}
where
$$
D_{mn}(x,y)=\sum_{{|k|}/{m}+{|l|}/{n}\le 1} e^{i(kx+ly)}
$$
is the corresponding Dirichlet kernel.

The following asymptotic equality was established in~\cite{Ku}:
\begin{equation}\label{Ku1}
  \mathcal{L}_{mn}=\frac{16}{\pi^4}(2\log m\log n-\log^2 m)+\mathcal{O}(\log n),
\end{equation}
where $m,n\in \N$ are such that $\frac n m\in \N$.

We improve this formula by considering arbitrary $m,n\in \N$.

\begin{theorem}\label{th1}
  Let $m,n\in \N$, $3\le m\le n$. Then
  \begin{equation}\label{th1.1}
  \mathcal{L}_{mn}=\frac{16}{\pi^4}(2\log m\log n-\log^2 m)+\|F_{mn}\|_{L(\T^2)}+\mathcal{O}(\log\log n \log m+\log n),
\end{equation}
where
\begin{equation}\label{F}
  \begin{split}
F_{mn}(x,y):=4\sum_{k=1}^m \left\{-\frac nmk\right\}\cos kx\cos{n\(1+\frac{k}m\)y}.
  \end{split}
\end{equation}
Moreover, if $m$ and $n$ are such that $n=\l m+p$, where $\l,p\in\N$, $1\le p\le m$. Then
  \begin{equation}\label{th1.1c}
  \mathcal{L}_{mn}=\frac{16}{\pi^4}(2\log m\log n-\log^2 m)+\mathcal{O}\(\log n+p \log \frac{m}{p}\).
\end{equation}
\end{theorem}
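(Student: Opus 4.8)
The plan is to start from the integral representation~\eqref{zv} and to reduce the problem to the asymptotic analysis of $\|D_{mn}\|_{L(\T^2)}$. First I would sum the geometric series in the variable $l$ for each fixed $k$, which turns the rhombus kernel into a single sum of one-dimensional Dirichlet kernels,
$$
D_{mn}(x,y)=\frac{1}{\sin(y/2)}\Bigl(\sin\bigl((n+\tfrac12)y\bigr)+2\sum_{k=1}^{m}\cos kx\,\sin\bigl((M_k+\tfrac12)y\bigr)\Bigr),\qquad M_k=\lfloor n(1-k/m)\rfloor .
$$
The decisive observation is that $M_k+\tfrac12=n(1-k/m)+\tfrac12-\{-nk/m\}$, so the deviation of the staircase $M_k$ from the linear profile $n(1-k/m)$ is governed \emph{precisely} by the fractional parts $\{-nk/m\}$. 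When $n/m\in\N$ these fractional parts all vanish and the staircase is exactly linear; this is the case treated in~\cite{Ku}, and it leads to~\eqref{Ku1}. The point of the theorem is to quantify the general anisotropic situation.

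Next I would split $D_{mn}=D^{\circ}_{mn}+R_{mn}$, where $D^{\circ}_{mn}$ is obtained by replacing $M_k+\tfrac12$ by its linear part $n(1-k/m)$, and $R_{mn}$ collects the fractional-part corrections. For $D^{\circ}_{mn}$ the $L(\T^2)$-norm is computed by extending the chain of equalities and estimates of~\cite{Ku,Ku2} (originally stated for $n/m\in\N$) to arbitrary $3\le m\le n$; this yields the main term $\tfrac{16}{\pi^4}(2\log m\log n-\log^2 m)+\mathcal O(\log n)$. It then remains to show that the fractional-part corrections contribute, in $L^1$, exactly $\|F_{mn}\|_{L(\T^2)}$ up to an admissible error. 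Here one analyses the sign structure of $D_{mn}$ in the variable $y$: the dominant oscillation has frequency of order $n$ (coming from $\sin((n+\tfrac12)y)$), and the interaction of this oscillation with the corrections $R_{mn}$ produces the shifted arguments $n(1+k/m)$ and the coefficients $\{-nk/m\}$ appearing in~\eqref{F} (this also explains why the $y$-frequencies in~\eqref{F} range up to $2n$, beyond those present in $D_{mn}$ itself). Making this rigorous is where I would invoke the comparison technique of~\cite{KL} and the sign-localization method of~\cite{KLi}.

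I expect the isolation of $F_{mn}$ to be the main obstacle. The difficulty is twofold: one must first show that passing to the absolute value commutes, up to a controlled error, with the above splitting, so that $\|D_{mn}\|_{L(\T^2)}=\|D^{\circ}_{mn}\|_{L(\T^2)}+\|F_{mn}\|_{L(\T^2)}+\text{error}$ rather than an uncontrolled mixture; and one must then bound the accumulated error, which rests on a Diophantine estimate for the sums built from $\{-nk/m\}$. It is this estimate that is responsible for the unusual $\log\log n\,\log m$ term in~\eqref{th1.1}, the remaining contributions being of size $\mathcal O(\log n)$. Note that~\eqref{th1.1} is only an upper bound on the size of the error beyond $\|F_{mn}\|_{L(\T^2)}$, and is deliberately left in a form that isolates $F_{mn}$ as the explicit obstruction.

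Finally, for the sharper formula~\eqref{th1.1c} I would specialise to $n=\l m+p$. The key simplification is that $nk/m=\l k+pk/m$, whence $\{-nk/m\}=\{-pk/m\}$, so that $F_{mn}$ depends on the arithmetic of $p$ and $m$ only and, in particular, $\{-pk/m\}$ is periodic and structured in $k$. Exploiting this, I would estimate $\|F_{mn}\|_{L(\T^2)}$ directly by partitioning $k\in\{1,\dots,m\}$ into about $p$ blocks on each of which $k\mapsto\{-pk/m\}$ is linear, applying Abel summation together with the standard $L^1$ bounds for the one-dimensional Dirichlet and Fej\'er kernels; this gives $\|F_{mn}\|_{L(\T^2)}=\mathcal O\bigl(p\log (m/p)+\log n\bigr)$. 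The same arithmetic structure, fed back into the error analysis of the second paragraph, removes the generic $\log\log n\,\log m$ loss (which is an artifact of the crude Diophantine bound), so that in this regime the total error is genuinely $\mathcal O\bigl(\log n+p\log (m/p)\bigr)$, yielding~\eqref{th1.1c}. Since every $n\ge m\ge3$ admits such a representation with $1\le p\le m$, this covers the full range of the theorem.
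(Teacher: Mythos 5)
Your outline has the same skeleton as the paper's proof: reduce to the $L^1$-norm of $D_{mn}$ via \eqref{zv}, split the kernel into a linear-profile part plus fractional-part corrections (your observation $M_k+\tfrac12=n(1-k/m)+\tfrac12-\{-nk/m\}$ is exactly what drives the paper's Lemma~\ref{le1}), compute the linear part by reduction to Kuznetsova's results, and, for $n=\l m+p$, use $\{-nk/m\}=\{-pk/m\}$ together with the block-linearity of $k\mapsto\{-pk/m\}$ (your last paragraph is essentially the paper's Lemma~\ref{le3}, and that part is sound). The genuine gaps are the two steps you delegate to citations, and these are precisely the substance of the proof. First, the additivity $\|D_{mn}\|_{L^1}=\|S_{mn}\|_{L^1}+\|F_{mn}\|_{L^1}+\mathcal{O}(\log\log n\log m)$, which you rightly flag as the main obstacle, is not obtained by any ``sign-localization'' of $D_{mn}$ or by analysing interactions of oscillations; invoking the techniques of the cited papers by name is not an argument. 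The paper's mechanism (Lemma~\ref{le4}) is concrete and elementary, and your proposal never produces it: cut the $y$-integration at $1/\log n$; for $y<1/\log n$ the $F$-part contributes only $\mathcal{O}(\log m)$, because for each fixed $y$ one has $\int_{\T}|F_{mn}(x,y)|\,dx\lesssim\|\mathcal{F}_{mn}\|_{L(\T)}\lesssim\log^2 m\le \log m\log n$, while for $y>1/\log n$ the main part contributes only $\mathcal{O}(\log m\log\log n)$ because of its $1/y$ factor; hence on each region the triangle inequality is lossless up to exactly these errors. Second, your account of where $F_{mn}$ comes from is wrong: the fractional-part corrections are not generated by interaction with the frequency-$n$ oscillation; they enter the kernel directly and exactly, via Abel summation (in the paper, Stieltjes integration against $d\{l\}$ and the Fourier expansion of the fractional part), producing the coefficient $2\{-nk/m\}-1$ attached to the frequency $n(1-|k|/m)\le n$, see \eqref{8}, plus a remainder $R_{mn}$ with $\|R_{mn}\|_{L(\T^2)}\lesssim\log m$. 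Since you never carry out this algebra, your claim that the corrections contribute ``exactly $\|F_{mn}\|_{L(\T^2)}$'' is unsubstantiated.

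The second missing step is the asymptotics of the linear-profile part for arbitrary $3\le m\le n$. ``Extending the chain of equalities and estimates of [Ku, Ku2]'' is the paper's Lemma~\ref{le5}, i.e.\ roughly half the technical work, and it hinges on an ingredient absent from your sketch: the product identity \eqref{3}--\eqref{4}, which turns the linear-profile sum into $\frac2y D_m(x+\frac nm y)D_m(x-\frac nm y)\sin\frac nm y$. This factorization is what permits rescaling to the diagonal kernel $S_{mm}$ on the strip $[0,\pi]\times[0,\frac nm\pi]$, splitting into roughly $n/m$ cells with $1/(y+\pi k)$ replaced by $1/(\pi k)$, and invoking the asymptotics \eqref{leKu1+}--\eqref{leKu2+} for $\D_m^{(1)}$, $\D_m^{(2)}$; your $D^{\circ}_{mn}$ is left as an unfactored sum, so the ``extension'' has no starting point. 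Finally, your explanation of \eqref{th1.1c} does not stand as written: the $\log\log n\log m$ term is the tail of the \emph{main} part over the region $y>1/\log n$, so a better bound on the fractional-part sums cannot remove it unless one also redoes the region argument (moving the cut to $y\sim1$, which the improved bound $\sup_y\int_\T|F_{mn}(x,y)|\,dx\lesssim p\log\frac mp$ indeed allows); the paper's actual route is simpler and worth knowing: in this regime one abandons additivity entirely and bounds $\|D_{mn}-S_{mn}\|\le\|F_{mn}\|+\|R_{mn}\|\lesssim p\log\frac mp+\log m$ by the plain triangle inequality, which is \eqref{le4.1+}. In short: the plan is structurally the paper's, but the two lemmas that constitute the proof are missing, and the heuristics offered in their place would have to be discarded rather than made rigorous.
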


\begin{remark}\label{rem1}
It is known that $\|F_{mn}\|_{L(\T^2)}=\mathcal{O}(\log^2 m)$, see Lemma~\ref{le2} below. In some special cases, one can even show that this estimate is sharp, see, e.g.,~\cite{NP}. At the same time, it follows from Lemma~\ref{le3} that in the case $n=\l m+p$, where $\l,p\in\N$, one has the estimate $\|F_{mn}\|_{L(\T^2)}=\mathcal{O}(p\log \frac mp)$, which is better than $\log^2 m$ for an appropriate fixed $p$.  Generally, asymptotic properties of $\|F_{mn}\|_{L(\T^2)}$ are unknown, see, e.g.,~\cite{NP} for a discussion.
\end{remark}

\subsection{$\ell_1$-discrete partial sums of Fourier series}
The discrete analogue of partial sums~\eqref{ps} is given by
$$
\widetilde{\mathcal{S}}_{mn}(f)(x,y)=\sum_{{|k|}/{m}+{|l|}/{n}\le 1}\widetilde{c}_{kl}(f)e^{i(kx+ly)},
$$
where
$$
\widetilde{c}_{kl}(f)=\frac1{4mn}\sum_{{}_{-n\le \nu\le n-1}^{-m\le \mu\le m-1}}f(x_\mu,y_\nu)e^{-i(kx_\mu+ly_\nu)}.
$$
The Lebesgue constant of $\widetilde{\mathcal{S}}_{mn}$ can be defined by
$$
\Lambda_{mn}:=\sup_{(x,y)\in [0,\pi]^2} L_{mn}(x,y),
$$
where
$$
L_{mn}(x,y):=\sup_{\|f\|\le 1}|\widetilde{\mathcal{S}}_{mn}(f)(x,y)|=\frac1{4mn}\sum_{{}_{-n\le \nu\le n-1}^{-m\le \mu\le m-1}}|D_{mn}(x-x_\mu,y-y_\nu)|
$$
is the corresponding Lebesgue function.

The following  asymptotic equality was obtained in~\cite{Ku2}:
\begin{equation*}
\begin{split}
     L_{mn}(x,y)=\frac{2}{\pi^2}\Phi_{mn}(x,y)\(2\log m\log n-\log^2 m\)+\mathcal{O}(\log n),
\end{split}
\end{equation*}
where $m,n\in \N$ are such that $\frac n m\in \N$ and
\begin{equation}
\label{phi}
\begin{split}
\Phi_{mn}(x,y):=\Big|\sin&\frac{mx+ny}{2}\sin\frac{mx-ny}{2}\Big|+\Big|\cos\frac{mx+ny}{2}\cos\frac{mx-ny}{2}\Big|.
\end{split}
\end{equation}
In particular,
\begin{equation}\label{a2}
  \Lambda_{mn}=\frac{2}{\pi^2}\(2\log m\log n-\log^2 m\)+\mathcal{O}(\log n).
\end{equation}

In the next theorem, we obtain an analogue of~\eqref{a2} for any $m$ and $n$.

\begin{theorem}\label{th1+}
  Let $m,n\in \N$, $3\le m\le n$. Then
\begin{equation}\label{th1.2}
\begin{split}
     \Lambda_{mn}=\frac{2}{\pi^2}&\(2\log m\log n-\log^2 m\)+\mathfrak{F}_{mn}+\mathcal{O}(\log\log n \log m+\log n),
\end{split}
\end{equation}
where
$$
\mathfrak{F}_{mn}:=\sup_{(x,y)\in \T^2}\frac1{mn}\sum_{{}_{0\le \nu\le n-1}^{0\le \mu\le m-1}}|F_{mn}(x+x_\mu,y+y_\nu)|
$$
and $F_{mn}$ is defined in~\eqref{F}.
Moreover, if $m$ and $n$ are such that $n=\l m+p$, where $\l,p\in\N$, $1\le p\le m$. Then
\begin{equation}\label{th1.2c}
\begin{split}
     \Lambda_{mn}=\frac{2}{\pi^2}&\(2\log m\log n-\log^2 m\)+\mathcal{O}\(\log n+p \log \frac{m}{p}\).
\end{split}
\end{equation}
\end{theorem}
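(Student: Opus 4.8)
The plan is to follow the route used for the continuous case in Theorem~\ref{th1}, but to work throughout with the discrete grid average in place of the integral \eqref{zv}, and to extend to arbitrary $m,n$ the pointwise analysis of \cite{Ku2}, which was available only when $n/m\in\N$. The object to understand is the Lebesgue function $L_{mn}(x,y)=\frac1{4mn}\sum_{\mu,\nu}|D_{mn}(x-x_\mu,y-y_\nu)|$, and the first step is a pointwise asymptotic expansion of it. I would begin from the partially summed form $D_{mn}(x,y)=\sum_{k=-m}^m e^{ikx}\,\frac{\sin((N_k+\frac12)y)}{\sin(y/2)}$ with $N_k=[n(1-|k|/m)]$, and split $N_k+\frac12=n(1-|k|/m)+\frac12-\{-\frac nm k\}$. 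This separates an ``ideal'' frequency, carrying the main contribution, from the fractional defect $\{-\frac nm k\}$, which is exactly the quantity appearing in the definition \eqref{F} of $F_{mn}$; when $n/m\in\N$ all these defects vanish and one recovers the setting of \cite{Ku2}.

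Second, I would run the quadrature and summation estimates on $L_{mn}$ as in the proof of Theorem~\ref{th1}, but summing over the grid $(x-x_\mu,y-y_\nu)$ instead of integrating. The ideal part of the kernel, after taking the modulus and averaging over the grid, should produce the modulated main term $\frac2{\pi^2}\Phi_{mn}(x,y)(2\log m\log n-\log^2 m)$ with $\Phi_{mn}$ as in \eqref{phi}; this is the same mechanism that in the continuous case produces $\frac{16}{\pi^4}(2\log m\log n-\log^2 m)$, consistent with the fact that the mean of $\Phi_{mn}$ equals $8/\pi^2$, so that $\frac2{\pi^2}\cdot\frac8{\pi^2}=\frac{16}{\pi^4}$. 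The defect part should assemble, after grid averaging and using the evenness of $F_{mn}$ to reduce the grid, into the discrete average $\frac1{mn}\sum_{0\le\mu\le m-1,\,0\le\nu\le n-1}|F_{mn}(x+x_\mu,y+y_\nu)|$ figuring in $\mathfrak{F}_{mn}$, up to the admissible error $\mathcal{O}(\log\log n\log m+\log n)$. The technical heart here is controlling the cross terms between the ideal and the defect parts, and making the passage from the integral bounds to the discrete sums uniform in $(x,y)$; the $\log\log n$ factor is expected to enter from the dyadic/harmonic summation of the remainder, as in the proof of Theorem~\ref{th1}.

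Third, I would take the supremum. Since $|\sin A\sin B|+|\cos A\cos B|\le 1$ by Cauchy--Schwarz, one has $\sup_{x,y}\Phi_{mn}(x,y)=1$, with equality precisely on the grid lines $x\in\frac\pi m\Z$ or $y\in\frac\pi n\Z$, and this yields the upper bound $\Lambda_{mn}\le\frac2{\pi^2}(2\log m\log n-\log^2 m)+\mathfrak{F}_{mn}+\mathcal{O}(\log\log n\log m+\log n)$ at once. The matching lower bound is the main obstacle: one must exhibit a point where $\Phi_{mn}$ is essentially maximal \emph{and} the defect average is essentially $\mathfrak{F}_{mn}$ simultaneously. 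Here I would use that the defect average is invariant under shifts of $(x,y)$ by the grid vectors $(x_\mu,y_\nu)$, hence is determined on a single fundamental cell, and combine this with the fact that the maximizing set of $\Phi_{mn}$ is the full grid of lines. Concretely, I would take a maximizer of the defect average, shift it onto the nearest grid line, and check that the resulting loss in $\Phi_{mn}$ and the change in the defect average are both within $\mathcal{O}(\log\log n\log m+\log n)$; establishing that this ``supremum of a sum equals sum of suprema'' up to the error term is the delicate point, and it gives \eqref{th1.2}.

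Finally, for the refined formula \eqref{th1.2c} under $n=\l m+p$, I would note that $\{-\frac nm k\}=\{-\frac pm k\}$, so $F_{mn}$ and its discrete average acquire the periodic-in-$k$ structure exploited in Lemma~\ref{le3}. A discrete counterpart of Lemma~\ref{le3} should then give $\mathfrak{F}_{mn}=\mathcal{O}(\log n+p\log\frac mp)$, and in this regime the coarser fractional-part sums should also let one replace the $\mathcal{O}(\log\log n\log m)$ remainder by $\mathcal{O}(\log n+p\log\frac mp)$; absorbing $\mathfrak{F}_{mn}$ into the error then yields \eqref{th1.2c}. I expect the discrete estimate for $\mathfrak{F}_{mn}$ and the simultaneous-maximization step of the previous paragraph to be the two points demanding the most care.
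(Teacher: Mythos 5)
Your steps 1, 2 and 4 retrace the paper's own route. The ideal/defect splitting you start from is Lemma~\ref{le1}, $D_{mn}=S_{mn}-F_{mn}+R_{mn}$; the grid-average asymptotics you want are exactly \eqref{le4.2} and \eqref{le5.2} (which the paper obtains by reduction to Kuznetsova's results, Lemma~\ref{leKu}, rather than by redoing the pointwise analysis of \cite{Ku2}); the reduction to the cell $|x|\le\pi/(2m)$, $|y|\le\pi/(2n)$ is the periodicity of $L_{mn}$ that the paper invokes; and your ``discrete counterpart of Lemma~\ref{le3}'' is precisely the paper's combination of the Marcinkiewicz--Zygmund inequality (Lemma~\ref{lemz}), applied in the $x$-variable where $F_{mn}$ is a polynomial of degree $m$, with \eqref{le3.2}: this gives $\sup_{x,y}\|(F_{mn}(x+x_\mu,y+y_\nu))\|_{\ell^{mn}}\lesssim p\log\frac{m}{p}$, hence \eqref{le4.2+} and then \eqref{th1.2c}. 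Your side observations are also correct: the mean of $\Phi_{mn}$ is $8/\pi^2$, $\sup\Phi_{mn}=1$ with equality exactly on the lines $x\in\frac{\pi}{m}\Z$ or $y\in\frac{\pi}{n}\Z$, and the upper bound in \eqref{th1.2} follows at once.

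The gap is in your step 3, the lower bound in \eqref{th1.2}. Two things go wrong. First, the average defining $\mathfrak{F}_{mn}$ is not invariant under shifts by grid vectors: $F_{mn}$ is $2\pi$-periodic but not $\pi$-periodic in each variable, while the nodes $x_\mu$, $0\le\mu\le m-1$, fill only half a period, so replacing $x$ by $x+\pi/m$ trades the sample point $x$ for $x+\pi$ and genuinely changes the sum; only the full $2m\times 2n$ grid average (the one that actually occurs in $L_{mn}$) is shift-invariant. Second, and decisively, the proposed repair --- take a maximizer of the defect average and move it onto the nearest grid line, controlling the change --- cannot be carried out by any continuity estimate. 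The $y$-spectrum of $F_{mn}$ consists of frequencies $n(1+k/m)$ lying between $n$ and $2n$, so a shift of size $\pi/(2n)$ changes every phase by at least $\pi/2$; node averages of such functions are not stable at this scale (for instance, the average of $|\cos(2n(y+y_\nu))|$ over $\nu$ is exactly $|\cos 2ny|$, which drops from $1$ to $0$ under a shift by $\pi/(4n)$), and the same objection applies to $x$-shifts of size $\pi/(2m)$ since $F_{mn}(\cdot,y)$ has degree $m$. Because the defect average can be as large as $\asymp\log^2 m$ (Remarks~\ref{rem1} and~\ref{rem2}) --- and $n\sim m$ is precisely the regime in which the term $\mathfrak{F}_{mn}$ in \eqref{th1.2} is not absorbed by the error --- a change by a fixed fraction of it is far larger than $\mathcal{O}(\log\log n\log m+\log n)$. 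So simultaneous near-maximality of $\Phi_{mn}$ and of the defect average cannot be produced by perturbing a maximizer; a genuinely different argument is needed here. For comparison, the paper does not perturb anything: its proof consists of the pointwise expansion of $L_{mn}$ on the cell obtained from \eqref{le4.2}, \eqref{le5.2} and periodicity, after which the supremum is taken termwise, the upper bound being immediate; your step 3 is the one place where you attempt to justify more than the paper states, and the mechanism you propose does not close it.
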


\begin{remark}\label{rem2}
The same comments as in Remark~\ref{rem1} hold true for the quantity $\mathfrak{F}_{mn}$.
To see this, it suffices to apply the Marcinkiewicz-Zygmund  inequality (see Lemma~\ref{lemz}), which shows that $\mathfrak{F}_{mn}\lesssim \sup_{y\in \T}\|F_{mn}(\cdot,y)\|_{L(\T)}$, and then to use Lemma~\ref{le3}.
\end{remark}

\section{Auxiliary results}


For any $m,n\in \N$, we denote
\begin{equation}\label{S}
  \begin{split}
S_{mn}(x,y):=\frac2yD_m\(x+\frac nmy\)D_m\(x-\frac nmy\)\sin\frac nmy,
  \end{split}
\end{equation}
where
$$
D_m(x):=
\frac{\sin\frac{m x}2}{\sin \frac x2},
$$
and
\begin{equation}\label{R}
  \begin{split}
R_{mn}(x,y):=\sum_{\nu\neq 0}\frac{y}{\pi \nu (2\pi\nu+y)}\sum_{k=-m}^m e^{ikx}\sin&\(n\(1-\frac{|k|}m\)(2\pi\nu+y)\)\\
&+\sum_{k=-m}^m e^{ikx}\cos\(n\(1-\frac{|k|}m\)y\).
  \end{split}
\end{equation}

\begin{lemma}\label{le1}
For any $m,n\in \N$,  we have
\begin{equation}\label{le1.1}
  D_{mn}(x,y)=S_{mn}(x,y)-F_{mn}(x,y)+R_{mn}(x,y),
\end{equation}
where $F_{mn}$ is defined in~\eqref{F}.
\end{lemma}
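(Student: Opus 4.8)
The plan is to collapse the double sum to a single sum over $k$ and then to resolve the singular factor in $y$ by partial fractions.

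First I would sum the geometric progression in $l$. For fixed $k$ with $|k|\le m$ the rhombus admits exactly the frequencies $|l|\le\lfloor a_k\rfloor$, so
\begin{equation*}
D_{mn}(x,y)=\sum_{k=-m}^{m}e^{ikx}\,\frac{\sin\big((\lfloor a_k\rfloor+\tfrac12)y\big)}{\sin\frac y2},\qquad a_k:=n\Big(1-\frac{|k|}m\Big).
\end{equation*}
Using $\sin((L+\tfrac12)y)=\cos\frac y2\sin(Ly)+\sin\frac y2\cos(Ly)$ I would then write $D_{mn}=\cot\frac y2\sum_k e^{ikx}\sin(\lfloor a_k\rfloor y)+\sum_k e^{ikx}\cos(\lfloor a_k\rfloor y)$. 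Because $n\in\N$ one has $\lfloor a_k\rfloor=a_k-\{-\frac nm|k|\}$, and the fractional parts $\{-\frac nm|k|\}$ are precisely the coefficients of $F_{mn}$; already replacing $\lfloor a_k\rfloor$ by $a_k$ in the last sum produces $\sum_k e^{ikx}\cos(n(1-\frac{|k|}m)y)$, which is the second term of $R_{mn}$, the incurred error being one contribution to $F_{mn}$.

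For the cotangent term I would insert the Mittag--Leffler expansion $\cot\frac y2=\frac2y+\sum_{\nu\neq0}\frac2{y+2\pi\nu}$ and exploit that $\lfloor a_k\rfloor\in\Z$ through $\sin(\lfloor a_k\rfloor y)=\sin(\lfloor a_k\rfloor(y+2\pi\nu))$, so that every pole of the cotangent carries the shifted argument $y+2\pi\nu$. Regularising the coefficients by $\frac2{y+2\pi\nu}=\frac1{\pi\nu}-\frac{y}{\pi\nu(2\pi\nu+y)}$, the odd part $\sum_{\nu\neq0}\frac1{\pi\nu}(\cdots)$ cancelling by symmetry, turns the terms with $\nu\neq0$, after the replacement $\lfloor a_k\rfloor\mapsto a_k$, into the absolutely convergent series $\sum_{\nu\neq0}\frac{y}{\pi\nu(2\pi\nu+y)}\sum_k e^{ikx}\sin(n(1-\frac{|k|}m)(2\pi\nu+y))$, that is, the first part of $R_{mn}$.

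It remains to reorganise the principal term $\frac2y\sum_k e^{ikx}\sin(\lfloor a_k\rfloor y)$ into $S_{mn}$. Here I would replace $\lfloor a_k\rfloor$ by $a_k$, split $e^{ikx}$ into its even and odd parts, and use the product-to-sum identity $D_m(x+\frac nm y)D_m(x-\frac nm y)=\frac{\cos ny-\cos mx}{\cos\frac nm y-\cos x}$ together with the factor $\sin\frac nm y$; this reflects the near-factorisation of the $\ell_1$-ball into the two diagonal slabs $|mx\pm ny|$, whose Dirichlet kernels are exactly the factors of $S_{mn}$. The main obstacle is twofold: carrying out this last reorganisation so that the product $S_{mn}$ emerges cleanly, and collecting all the floor defects $\lfloor a_k\rfloor\mapsto a_k$ scattered through the principal term and the cosine sum into precisely $-F_{mn}$, including the appearance of the frequency $n(1+\frac km)$. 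This requires a careful trigonometric treatment of $\sin((a_k-\{-\frac nm|k|\})y)$ and a rigorous justification of the rearrangement of the conditionally convergent Mittag--Leffler series.
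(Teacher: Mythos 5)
Your reduction steps are correct as far as they go: summing the geometric progression in $l$, the splitting $\sin((L_k+\tfrac12)y)=\cos\tfrac y2\sin(L_ky)+\sin\tfrac y2\cos(L_ky)$ with $L_k=\lfloor a_k\rfloor$, the regularisation $\tfrac{2}{y+2\pi\nu}=\tfrac1{\pi\nu}-\tfrac{y}{\pi\nu(2\pi\nu+y)}$, and the use of $\sin(L_ky)=\sin(L_k(y+2\pi\nu))$ are all fine, and this skeleton runs parallel to the paper's proof (which instead writes $\sum_l e^{ily}=\int e^{ily}\,d[l]$ and splits $d[l]=dl-d\{l\}$). The genuine gap is that the two steps you defer as ``the main obstacle'' are not technicalities: they \emph{are} the lemma, and one of your claims about them is false as stated. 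Writing $\theta_k=\{-\tfrac nm|k|\}$, so that $L_k=a_k-\theta_k$, the error incurred in the cosine sum is
\begin{equation*}
\cos(L_k y)-\cos(a_k y)=\cos(a_k y)\bigl(\cos(\theta_k y)-1\bigr)+\sin(a_k y)\sin(\theta_k y),
\end{equation*}
which is \emph{not} linear in $\theta_k$ and therefore is not ``one contribution to $F_{mn}$''; likewise the principal term produces $\tfrac2y\cos(a_ky)\sin(\theta_ky)$ where $F_{mn}$ requires exactly $2\theta_k\cos(a_ky)$, plus a further residual $\tfrac2y\sin(a_ky)(\cos(\theta_ky)-1)$, and the $\nu$-sums contribute defects of their own. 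Since~\eqref{le1.1} is an exact identity, you must prove that all of these transcendental-in-$\theta_k$ residuals cancel exactly, leaving only the expression $-F_{mn}$, which is linear in the fractional parts; your proposal contains no mechanism for this. The paper's mechanism is concrete and is precisely what is missing here: for fixed $k$ it integrates $\int_{-a_k}^{a_k}e^{ily}\,d\{l\}$ by parts, so that the boundary terms $\{a_k\}e^{ia_ky}-\{-a_k\}e^{-ia_ky}=2\theta_k\cos(a_ky)-e^{-ia_ky}$ deliver the linear-in-$\theta_k$ terms forming $F_{mn}$, and then expands $\{\cdot\}$ in its Fourier series inside the remaining integral, which is what generates the $\nu$-series of $R_{mn}$.

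The second deferred step is equally essential and also not supplied. The product formula you quote, $D_m(x+\tfrac nmy)D_m(x-\tfrac nmy)=\frac{\cos ny-\cos mx}{\cos\tfrac nmy-\cos x}$, is correct, but it is the target rather than a derivation: what is needed is the summation identity (the paper's~\eqref{3}, obtained from the Lagrange/Gradshteyn--Ryzhik formula for $\sum_{k=1}^m\sin(kz+b)$) showing that $2\sum_{k=1}^m\cos kx\,\sin\bigl(n(1-\tfrac km)y\bigr)=D_m(x+\tfrac nmy)D_m(x-\tfrac nmy)\sin\tfrac nmy-\sin ny$, which is what turns the principal term into $S_{mn}$. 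As written, neither the emergence of $S_{mn}$, nor of $F_{mn}$, nor of $R_{mn}$ is actually established; the proposal is a plausible programme --- essentially equivalent, once completed, to the paper's argument --- but not a proof.
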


\begin{proof}
We obtain
\begin{equation*}
  \begin{split}
     D_{mn}(x,y)&=\sum_{k=-m}^m e^{ikx}\sum_{l=-[n(1-\frac{|k|}m)]}^{[n(1-\frac{|k|}m)]}e^{ily}=\sum_{k=-m}^m e^{ikx}\int_{-n(1-\frac{|k|}m)}^{n(1-\frac{|k|}m)}e^{ily}d[l]\\
     &=\sum_{k=-m}^m e^{ikx}\int_{-n(1-\frac{|k|}m)}^{n(1-\frac{|k|}m)}e^{ily}dl-\sum_{k=-m}^m e^{ikx}\int_{-n(1-\frac{|k|}m)}^{n(1-\frac{|k|}m)}e^{ily}d\{l\}\\
     &:=J_1(x,y)-J_2(x,y).
  \end{split}
\end{equation*}
Consider $J_1$. We have
\begin{equation}\label{2}
  \begin{split}
     J_{1}(x,y)&=\frac2y\sum_{k=-m}^m e^{ikx}\sin n\(1-\frac{|k|}m\)y\\
     &=\frac{2\sin ny}{y}+\frac4y\sum_{k=1}^m \cos kx\sin n\(1-\frac{k}m\)y.
  \end{split}
\end{equation}
Next, using~\cite[AD(361.8)]{GR} and the standard trigonometric identities, we derive
\begin{equation}\label{3}
  \begin{split}
     2&\sum_{k=1}^m \cos kx \sin n\(1-\frac{k}m\)y\\
     &=\sum_{k=1}^m \(\sin\(\big(x-\frac nm y\big)k+ny\)-\sin\(\big(x+\frac nm y\big)k-ny\)\)\\
     &=\sin\(\frac{m+1}{2}\big(x-\frac nmy\big)+ny\)D_m\(x-\frac nmy\)\\
     &\qquad\qquad\qquad\qquad\qquad-\sin\(\frac{m+1}{2}\big(x+\frac nmy\big)-ny\)D_m\(x+\frac nmy\)\\
     &=\sin\(\frac{mx+ny}{2}+\frac{1}{2}\big(x-\frac nmy\big)\)D_m\(x-\frac nmy\)\\
     &\qquad\qquad\qquad\qquad\qquad-\sin\(\frac{mx-ny}{2}+\frac{1}{2}\big(x+\frac nmy\big)\)D_m\(x+\frac nmy\)\\
     &=\sin\(\frac{mx+ny}{2}\)\sin\(\frac{mx-ny}{2}\)\(\cot\frac{1}{2}\big(x-\frac nmy\big)-\cot\frac{1}{2}\big(x+\frac nmy\big)\)\\
     &\qquad\qquad+\sin\(\frac{mx-ny}{2}\)\cos\(\frac{mx+ny}{2}\)-\cos\(\frac{mx-ny}{2}\)\sin\(\frac{mx+ny}{2}\)\\
     &=D_m\(x+\frac nmy\)D_m\(x-\frac nmy\)\sin\frac nmy-\sin ny.
  \end{split}
\end{equation}
Combining~\eqref{2} and~\eqref{3}, we get
\begin{equation}\label{4}
  \begin{split}
J_1(x,y)=\frac2yD_m\(x+\frac nmy\)D_m\(x-\frac nmy\)\sin\frac nmy=S_{mn}(x,y).
  \end{split}
\end{equation}
Consider $J_2$. Integrating by parts, we obtain
\begin{equation}\label{5}
  \begin{split}
\int_{-n(1-\frac{|k|}m)}^{n(1-\frac{|k|}m)}e^{ily}d\{l\}=2\left\{-\frac nm|k|\right\}\cos{n\(1-\frac{|k|}m\)y}-&e^{-in(1-\frac{|k|}m)y}\\
&-iy\int_{-n(1-\frac{|k|}m)}^{n(1-\frac{|k|}m)}\{l\}e^{ily}dl.
  \end{split}
\end{equation}
Next, representing $\{\cdot\}$ via the Fourier series as
$$
\{\xi\}=\frac12+\sum_{\nu\neq 0}\frac1{2\pi i \nu}e^{2\pi i\nu \xi},
$$
we derive
\begin{equation}\label{6}
  \begin{split}
\int_{-n(1-\frac{|k|}m)}^{n(1-\frac{|k|}m)}\{l\}e^{ily}dl&=\frac1y\sin n\(1-\frac{|k|}m\)y+\sum_{\nu\neq 0}\frac1{2\pi i \nu}\int_{-n(1-\frac{|k|}m)}^{n(1-\frac{|k|}m)}e^{2\pi i\nu l}e^{ily}dl\\
&=\frac1y\sin n\(1-\frac{|k|}m\)y+\sum_{\nu\neq 0}\frac{\sin\((2\pi\nu+y)n\(1-\frac{|k|}m\)\)}{\pi i \nu (2\pi\nu+y)}.
  \end{split}
\end{equation}
Combining~\eqref{5} and~\eqref{6}, we get
\begin{equation*}
  \begin{split}
\int_{-n(1-\frac{|k|}m)}^{n(1-\frac{|k|}m)}e^{ily}d\{l\}=
\(2\left\{-\frac nm|k|\right\}-1\)&\cos{n\(1-\frac{|k|}m\)y}\\
&-y\sum_{\nu\neq 0}\frac{\sin\((2\pi\nu+y)n\(1-\frac{|k|}m\)\)}{\pi \nu (2\pi\nu+y)}.
  \end{split}
\end{equation*}
This implies that
\begin{equation}\label{8}
  \begin{split}
&J_2(x,y)=\sum_{k=-m}^m e^{ikx}\(2\left\{-\frac nm|k|\right\}-1\)\cos{n\(1-\frac{|k|}m\)y}\\
&-\sum_{\nu\neq 0}\frac{y}{\pi \nu (2\pi\nu+y)}\sum_{k=-m}^m e^{ikx}\sin\((2\pi\nu+y)n\(1-\frac{|k|}m\)\)\\
&=F_{mn}(x,y)-\sum_{\nu\neq 0}\frac{y}{\pi \nu (2\pi\nu+y)}\sum_{k=-m}^m e^{ikx}\sin\((2\pi\nu+y)n\(1-\frac{|k|}m\)\)\\
&\qquad\qquad\quad\,\,\,-\sum_{k=-m}^m e^{ikx}\cos{n\(1-\frac{|k|}m\)y}.
  \end{split}
\end{equation}
Finally, combining~\eqref{4} and~\eqref{8}, we prove the lemma.
\end{proof}

\begin{remark}
It follows from~\eqref{3} and~\eqref{4} that 
\begin{equation}\label{S2}
  \begin{split}
S_{mn}(x,y)&=\frac2y\Bigg(\sin\(\frac{m+1}{2}\big(x-\frac nmy\big)+ny\)D_m\(x-\frac nmy\)\\
     &\qquad\qquad\qquad-\sin\(\frac{m+1}{2}\big(x+\frac nmy\big)-ny\)D_m\(x+\frac nmy\)+\sin ny\Bigg).
  \end{split}
\end{equation}
\end{remark}

\medskip

Denote by $\mathcal{D}_m$ the following modification of the Dirichlet kernel ${D}_m$:
$$
\mathcal{D}_m(x):=\sum_{k=1}^m e^{ikx}.
$$

\begin{lemma}\label{le2}
  Let $m,n\in \N$ be such that $3\le m\le n$. Then
    \begin{equation}\label{le2.1}
    \|\mathcal{D}_{m}\|_{L(\T)}\lesssim \log m,
  \end{equation}
  \begin{equation}\label{le2.2}
    \|F_{mn}\|_{L(\T^2)}\lesssim \log^2 m,
  \end{equation}
    \begin{equation}\label{le2.3}
    \|R_{mn}\|_{L(\T^2)}\lesssim \log m.
  \end{equation}
\end{lemma}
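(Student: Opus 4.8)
The three estimates grow in difficulty, and I would prove them in order, feeding each into the next.

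\textbf{Estimate \eqref{le2.1}.} Since $\mathcal{D}_m(x)=\sum_{k=1}^m e^{ikx}=e^{ix}(e^{imx}-1)/(e^{ix}-1)$, one has $|\mathcal{D}_m(x)|=\big|\sin(mx/2)/\sin(x/2)\big|=|D_m(x)|$, so $\|\mathcal{D}_m\|_{L(\T)}=\|D_m\|_{L(\T)}$ and the bound $\lesssim\log m$ is the classical $L^1$-estimate for the Dirichlet kernel. I would record at once that $|\mathcal{D}_m|$ (equivalently $|D_m|$) is $2\pi$-periodic, so $\int_{\T}|\mathcal{D}_m(t+a)|\,dt\lesssim\log m$ for every real shift $a$; this shift-invariance drives the other two parts.

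\textbf{Estimate \eqref{le2.3}.} Split $R_{mn}$ as in \eqref{R} into the $\nu$-series (Part A) and the single sum $\sum_{k=-m}^m e^{ikx}\cos(n(1-|k|/m)y)$ (Part B). Writing $\beta_k=n(1-|k|/m)$ and $Y=2\pi\nu+y$, the inner sum $\sum_{k=-m}^m e^{ikx}\sin(\beta_k Y)$ equals $\operatorname{Im}\big(e^{inY}\sum_{k=-m}^m e^{ikx}e^{-i\frac nm|k|Y}\big)$; collapsing the geometric sums in $k$ expresses it as a linear combination of $\mathcal{D}_m(x\pm\frac nm Y)$ and their conjugates. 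This is the computation of \eqref{3}, but kept in the sum-of-two-kernels form rather than collapsed into the product $D_m(x+\cdot)D_m(x-\cdot)$. Hence $\big|\sum_k e^{ikx}\sin(\beta_k Y)\big|\le 1+|D_m(x+\frac nm Y)|+|D_m(x-\frac nm Y)|$, and the shift-invariance above gives $\int_{\T}|\cdots|\,dx\lesssim\log m$ uniformly in $\nu,y$. Using $|2\pi\nu+y|\ge\pi|\nu|$ and $\int_{\T}|y|\,dy\lesssim 1$, the weight in Part A integrates over $y$ to $\lesssim 1/\nu^2$; summing over $\nu$ (the interchange being justified by absolute convergence) gives $\|\text{Part A}\|_{L(\T^2)}\lesssim\log m$. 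The identical manipulation with $Y=y$ and $\cos$ in place of $\sin$ handles Part B, which proves \eqref{le2.3}.

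\textbf{Estimate \eqref{le2.2}, the main work.} First I would reduce to one dimension. Expanding $\cos kx\cos(n(1+\frac km)y)$ by product-to-sum and collecting the shear variables $p=x+\frac nm y$, $q=x-\frac nm y$ yields $F_{mn}=2\cos(ny)\,[g_c(p)+g_c(q)]+2\sin(ny)\,[g_s(q)-g_s(p)]$, where $g_c(t)=\sum_{k=1}^m\{-\frac nm k\}\cos kt$ and $g_s(t)=\sum_{k=1}^m\{-\frac nm k\}\sin kt$. As $g_c,g_s$ are $2\pi$-periodic, integrating first in $x$ over a full period removes the shear, giving $\|F_{mn}\|_{L(\T^2)}\lesssim\|g_c\|_{L(\T)}+\|g_s\|_{L(\T)}$. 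It then suffices to show $\|g_c\|_{L(\T)},\|g_s\|_{L(\T)}\lesssim\log^2 m$. Setting $m'=m/\gcd(m,n)$ and $n'=n/\gcd(m,n)$, the coefficients satisfy $\{-\frac nm k\}=\{-\frac{n'}{m'}k\}=:\psi(k\bmod m')$, a function on $\Z/m'$; its discrete Fourier expansion $\psi(r)=\sum_{s=0}^{m'-1}\widehat\psi_s\,e^{2\pi i sr/m'}$ turns $g_s$ into $\sum_s\widehat\psi_s\sum_{k=1}^m e^{2\pi i sk/m'}\sin kt$. Each inner $k$-sum is a combination of $\mathcal{D}_m(t\pm\frac{2\pi s}{m'})$, so its $L^1(\T)$-norm is $\lesssim\log m$ by \eqref{le2.1} and shift-invariance.

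\textbf{The crux} is the estimate $\sum_{s=0}^{m'-1}|\widehat\psi_s|\lesssim\log m$. Because $\gcd(m',n')=1$, the map $r\mapsto -n'r\bmod m'$ is a bijection, so $\psi$ is a permutation of the normalized ramp $r\mapsto r/m'$; reindexing, $\widehat\psi_s$ becomes, up to this permutation, the discrete Fourier transform of the ramp, whose coefficients decay like $1/\|s/m'\|$ (distance to the nearest integer). Summing these yields $\sum_s|\widehat\psi_s|\lesssim\log m'\le\log m$, whence $\|g_s\|_{L(\T)}\lesssim\log^2 m$, and $g_c$ is treated identically. This gives \eqref{le2.2}. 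I expect this ramp-transform bound $\sum_s|\widehat\psi_s|\lesssim\log m$ to be the only genuinely delicate step; it is precisely the device that decouples the two logarithmic factors, one coming from the Dirichlet kernel and one from the harmonic-type sum over the dual index truncated at length $m'$.
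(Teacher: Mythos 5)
Your proof is correct, but it is genuinely different from the paper's, which is essentially citation: \eqref{le2.1} is referred to Zygmund~\cite{Z}, \eqref{le2.2} to Lemma~3.9 of~\cite{KL}, and \eqref{le2.3} is said to follow easily from~\eqref{R} and~\eqref{le2.1}. Your treatment of \eqref{le2.1} and \eqref{le2.3} is exactly the computation those remarks presuppose, and it checks out: since the coefficients depend on $|k|$, the pairing of $\pm k$ makes the inner sum real, the identity
\begin{equation*}
\sum_{k=-m}^m e^{ikx}\sin\Bigl(n\bigl(1-\tfrac{|k|}m\bigr)Y\Bigr)
=\operatorname{Im}\Bigl(e^{inY}\bigl(1+\mathcal{D}_m\bigl(x-\tfrac nm Y\bigr)+\overline{\mathcal{D}_m\bigl(x+\tfrac nm Y\bigr)}\bigr)\Bigr)
\end{equation*}
holds, and the weight bound $|y|/(\pi|\nu|\,|2\pi\nu+y|)\le |y|/(\pi^2\nu^2)$ makes the $\nu$-series summable after the uniform $x$-integral bound $\lesssim\log m$. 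The substantive divergence is \eqref{le2.2}. The toolkit of this paper and of the cited~\cite{KL} (visible in the proofs of Lemma~\ref{le1} and Lemma~\ref{le3}) is the continuous Fourier expansion of $\{\cdot\}$ together with Dirichlet-kernel and Bernstein estimates, a method that does not care whether the slope $n/m$ is rational. You instead exploit rationality: after the correct shear reduction $\|F_{mn}\|_{L(\T^2)}\lesssim\|g_c\|_{L(\T)}+\|g_s\|_{L(\T)}$, the sequence $\{-nk/m\}$ is $m'$-periodic with $m'=m/\gcd(m,n)$ and equals the ramp $j\mapsto j/m'$ composed with multiplication by the unit $-n'$ in $\Z/m'$, so its DFT coefficients are a permutation of the ramp's, and $\sum_s|\widehat\psi_s|\lesssim\log m'$ follows from the exact identity $\sum_{j=0}^{m'-1}jz^j=m'/(z-1)$ for $z^{m'}=1$, $z\neq1$. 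This cleanly decouples the two logarithms, $\log m'\cdot\log m\le\log^2 m$, and is a legitimate self-contained alternative.

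Two points need sharpening, though neither is a fatal gap. First, with your normalization $\psi(r)=\sum_s\widehat\psi_s e^{2\pi i sr/m'}$, the ramp's coefficients decay like $1/(m'\|s/m'\|)\sim 1/\min(s,m'-s)$, not like $1/\|s/m'\|$ as written; with the rate you stated, the sum over $s$ would be of order $m'\log m'$ rather than $\log m'$, so the $1/m'$ factor must appear (your conclusion is the correct one). Second, a general permutation of the values of a function on $\Z/m'$ does \emph{not} merely permute its DFT coefficients; your reindexing works precisely because the permutation is multiplication by a unit $a=(-n')^{-1}\bmod m'$, which sends $\widehat\psi_s$ to the ramp coefficient at index $as\bmod m'$. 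This is the one place where coprimality enters and should be stated explicitly.
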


\begin{proof}
Inequality~\eqref{le2.1} is well known, see, e.g.,~\cite[Ch.~II, \S~12]{Z}.  For the proof of~\eqref{le2.2}, see Lemma~3.9 in~\cite{KL}.
Inequality~\eqref{le2.3} follows easily from~\eqref{R} and~\eqref{le2.1}.
\end{proof}

Denote
$$
\mathcal{F}_{mp}(x):=\sum_{k=0}^m \Big\{-\frac{pk}{m}\Big\}e^{ikx}.
$$

\begin{lemma}\label{le3}
  Let $m,p\in \N$. Then
  \begin{equation}\label{le3.1}
    \| \mathcal{F}_{mp}\|_{L(\T)}\lesssim \log^2 m.
  \end{equation}
Moreover,  if $1\le p<m$. Then
  \begin{equation}\label{le3.2}
    \| \mathcal{F}_{mp}\|_{L(\T)}\lesssim p\log\frac mp.
  \end{equation}
 In the above inequalities, the constant in $\lesssim$ does not depend on $p$ and $m$.
\end{lemma}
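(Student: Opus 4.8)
The plan is to prove the two estimates by separate arguments, since the universal bound \eqref{le3.1} and the refined bound \eqref{le3.2} exploit different features of the coefficient sequence $a_k=\{-pk/m\}$. Note first that $a_0=a_m=0$, so $\mathcal{F}_{mp}(x)=\sum_{k=0}^{m-1}a_ke^{ikx}$, and that $a_k$ depends only on $pk\bmod m$.

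For \eqref{le3.1} I would pass to the discrete Fourier transform on $\Z/m\Z$. Writing $a_k=\phi(pk\bmod m)$ with the discrete sawtooth $\phi(r)=\{-r/m\}$, a direct computation gives $\hat\phi(\nu)=(e^{2\pi\i\nu/m}-1)^{-1}$ for $\nu\neq0$ and $\hat\phi(0)=(m-1)/2$, so that $|\hat\phi(\nu)|=(2|\sin(\pi\nu/m)|)^{-1}$. Inserting the inversion formula for $\phi$ and interchanging the order of summation expresses $\mathcal{F}_{mp}$ as a superposition of translated Dirichlet kernels,
$$
\mathcal{F}_{mp}(x)=\frac1m\sum_{\nu=0}^{m-1}\hat\phi(\nu)\,\widetilde D_{m-1}\Big(x+\frac{2\pi\nu p}{m}\Big),\qquad \widetilde D_{m-1}(\theta)=\sum_{k=0}^{m-1}e^{\i k\theta}.
$$
Because the $L(\T)$-norm is translation invariant and $\|\widetilde D_{m-1}\|_{L(\T)}\lesssim\log m$ by \eqref{le2.1}, the triangle inequality yields $\|\mathcal{F}_{mp}\|_{L(\T)}\lesssim\frac{\log m}{m}\sum_{\nu=0}^{m-1}|\hat\phi(\nu)|$; the point is that this no longer depends on $p$. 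Using $|\sin(\pi\nu/m)|\ge 2\|\nu\|_m/m$, where $\|\nu\|_m$ is the distance from $\nu$ to $m\Z$, one gets $\sum_{\nu=1}^{m-1}|\hat\phi(\nu)|\lesssim m\sum_{\nu=1}^{m}\nu^{-1}\lesssim m\log m$, and together with $|\hat\phi(0)|\lesssim m$ this gives $\|\mathcal{F}_{mp}\|_{L(\T)}\lesssim\log^2 m$.

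For \eqref{le3.2} I would use Abel summation together with the monotonicity structure of the sawtooth. We may assume $1\le p\le m/2$; the complementary range reduces to this via the identity $\{pk/m\}+\{-pk/m\}\in\{0,1\}$, which changes $\mathcal{F}_{mp}$ only by a kernel of norm $\lesssim\log m$, and small $p$ is the regime relevant to Remarks~\ref{rem1}--\ref{rem2}. Summation by parts gives $\mathcal{F}_{mp}(x)=\sum_{k=0}^{m-1}(a_k-a_{k+1})\widetilde D_k(x)$, and one checks $a_k-a_{k+1}\in\{p/m,\,p/m-1\}$, the ``large'' value $p/m-1$ occurring at exactly $p$ indices (this follows by telescoping, since $\sum_k(a_k-a_{k+1})=a_0-a_m=0$). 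These $p$ large jumps split $\{0,\dots,m\}$ into $\lesssim p$ maximal runs on each of which $a_k$ is affine with slope $-p/m$; since $a_k$ drops by at most $1$ across a run, each run has length $\lesssim m/p$. Writing the contribution of a run beginning at $k_0$ as $e^{\i k_0x}\sum_{j}(a_{k_0}-jp/m)e^{\i jx}$ and applying Abel summation locally, the unimodular factor $e^{\i k_0x}$ drops out of the $L(\T)$-norm, leaving a local Dirichlet kernel $\sum_{i=0}^{j}e^{\i ix}$ of length $\lesssim m/p$, whose norm is $\lesssim\log(m/p)$ by \eqref{le2.1}. Summing over the $\lesssim p$ runs yields $\|\mathcal{F}_{mp}\|_{L(\T)}\lesssim p\log(m/p)$.

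The main obstacle is the passage from the crude $\log m$ to the sharp $\log(m/p)$ in the second part: a termwise estimate in the \emph{global} Abel summation bounds each partial sum $\widetilde D_k$ by $\log(k+2)\approx\log m$ and only produces $p\log m$. The gain comes from grouping the small steps between consecutive large jumps and factoring out the modulation $e^{\i k_0x}$, so that the relevant kernels are \emph{local} (length $\approx m/p$) rather than global; keeping track of the run lengths through the exact count of $p$ large jumps is what makes the constant come out right. The first part, by contrast, is robust but deliberately discards the $p$-dependence through translation invariance, which is exactly why it cannot detect the improvement for small $p$ and must be complemented by the second argument.
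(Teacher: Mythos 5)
Your treatment of \eqref{le3.2} is, in substance, the paper's own proof. The paper groups the indices into the $p$ blocks $[\frac{lm}{p}]<k\le[\frac{(l+1)m}{p}]$, $l=0,\dots,p-1$ --- exactly your maximal runs between consecutive large jumps --- factors out the modulation $e^{i[\frac{lm}{p}]x}$, uses that on each block $\{-\frac{pk}{m}\}$ is affine in $k$, and bounds the resulting local sums (written through $\mathcal{D}_N$ and its derivative, $N\approx m/p$) by Bernstein's inequality together with \eqref{le2.1}; your local Abel summation is an equivalent way of performing that last step. Where you genuinely differ is \eqref{le3.1}: the paper simply cites Lemma~3.9 of \cite{KL}, whereas your discrete-Fourier argument on $\Z/m\Z$ (the computation $\hat\phi(\nu)=(e^{2\pi i\nu/m}-1)^{-1}$ for $\nu\neq 0$ is correct), representing $\mathcal{F}_{mp}$ as a superposition of translates of one Dirichlet kernel, is a correct, self-contained proof that makes the $p$-independence of the $\log^2m$ bound transparent.

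One point in \eqref{le3.2} needs flagging. Your reflection $p\mapsto m-p$ for the range $p>m/2$ yields $\|\mathcal{F}_{mp}\|_{L(\T)}\lesssim(m-p)\log\frac{m}{m-p}+\log m$, and the extra $\log m$ cannot be removed, so your argument does not give the printed inequality for $p$ close to $m$ --- but the printed inequality is false there. Indeed, $\mathcal{F}_{m,m-1}(x)=\sum_{k=1}^{m-1}\frac{k}{m}\,e^{ikx}$ satisfies
\begin{equation*}
\mathcal{F}_{m,m-1}(x)+e^{i(m-1)x}\,\overline{\mathcal{F}_{m,m-1}(x)}=\frac{m-1}{m}\bigl(1+\mathcal{D}_{m-1}(x)\bigr),
\end{equation*}
whence $\|\mathcal{F}_{m,m-1}\|_{L(\T)}\gtrsim\log m$, while $p\log\frac{m}{p}=(m-1)\log\frac{m}{m-1}=\mathcal{O}(1)$. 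The paper's own proof shares this defect: when $p\sim m$ its blocks have bounded length, so $\|\mathcal{D}_{N_l}\|_{L(\T)}\asymp 1$ rather than $\log N_l$, and what it actually establishes is $\lesssim p\,(1+\log\frac{m}{p})$. The correct form of the lemma is the one your proof delivers, namely $\min(p,m-p)\log\frac{m}{\min(p,m-p)}+\log m$ (equivalently, the stated bound restricted to $p\le m/2$), and the error terms $p\log\frac{m}{p}$ in Theorems~\ref{th1}, \ref{th1+} and~\ref{th2} should be read with the same correction; in the regime of interest there ($p$ fixed, or more generally $p\le m/2$, with the additive $\mathcal{O}(\log n)$ present) nothing changes. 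Up to this issue, which lies in the statement rather than in your reasoning, your proposal is correct.
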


\begin{proof}
The proof of~\eqref{le3.1} can be found in~\cite{KL} (cf.~\eqref{le2.2}). Let us prove~\eqref{le3.2}.
We have
\begin{equation}\label{9}
  \begin{split}
    \mathcal{F}_{mp}(x)&=\sum_{l=0}^{p-1}\sum_{k=[\frac{lm}p]+1}^{[\frac{(l+1)m}p]} \bigg\{-\frac{pk}{m}\bigg\}e^{ikx}
    \\
    &=\sum_{l=0}^{p-1}e^{i[\frac{lm}p]x}\sum_{k=1}^{[\frac{(l+1)m}p]-[\frac{lm}p]} \bigg\{-\frac{p}{m}\Big(k-\Big\{\frac{lm}{p}\Big\}\Big)\bigg\}e^{ikx}.
  \end{split}
\end{equation}
Since $1\le k\le \frac mp-\{\frac{(l+1)p}m\}+\{\frac{lp}m\}$, we get  $0<\frac{p}{m}(k-\{\frac{lm}{p}\})\le 1-\frac pm\{\frac{(l+1)p}m\}< 1$, which together with~\eqref{9} implies
\begin{equation*}
  \begin{split}
    \mathcal{F}_{mp}(x)&=-\sum_{l=0}^{p-1}e^{i[\frac{lm}p]x}\sum_{k=1}^{[\frac{(l+1)m}p]-[\frac{lm}p]} \frac{p}{m}\Big(k-\Big\{\frac{lm}{p}\Big\}\Big)e^{ikx}\\
    &=-\frac pm\sum_{l=0}^{p-1}e^{i[\frac{lm}p]x}\(\mathcal{D}'_{[\frac{(l+1)m}p]-[\frac{lm}p]}(x)-\Big\{\frac{lm}{p}\Big\}
    \mathcal{D}_{[\frac{(l+1)m}p]-[\frac{lm}p]}(x)\).
  \end{split}
\end{equation*}
Thus, using the Bernstein inequality (see, e.g.,~\cite[Ch.~10, \S.~3]{Z}) and~\eqref{le2.1}, we derive
  \begin{equation*}
  \begin{split}
        \|\mathcal{F}_{mp}\|_{L(\T)}\le \frac pm\sum_{l=0}^{p-1}\(\Big\|\mathcal{D}'_{[\frac{(l+1)m}p]-[\frac{lm}p]}\Big\|_{L(\T)}
        +\Big\|\mathcal{D}_{[\frac{(l+1)m}p]-[\frac{lm}p]}\Big\|_{L(\T)}\)\lesssim p\log\frac mp,
  \end{split}
  \end{equation*}
 which proves the lemma. 
\end{proof}

Recall the well-known Marcinkiewicz-Zygmund inequality for trigonometric polynomials (see, e.g.,~\cite[4.3.3]{TB}).

\begin{lemma}\label{lemz}
  Let $T$ be a trigonometric polynomial of degree at most~$n$. Then
  \begin{equation*}
    \frac1n\sum_{\nu=0}^{n-1} |T(\pi\nu/n)|\lesssim \|T\|_{L(\T)}.
  \end{equation*}
\end{lemma}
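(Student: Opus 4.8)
The plan is to prove the inequality directly, bypassing the full Marcinkiewicz--Zygmund machinery, by comparing each sampled value $|T(\pi\nu/n)|$ with the average of $|T|$ over the neighbouring subinterval of length $h:=\pi/n$, and then summing. The only external ingredient will be Bernstein's inequality in the $L(\T)$ norm, $\|T'\|_{L(\T)}\le n\|T\|_{L(\T)}$, which is already available in the paper (it is invoked, with the reference \cite[Ch.~10, \S.~3]{Z}, in the proof of Lemma~\ref{le3}). Set $I_\nu=[\pi\nu/n,\pi(\nu+1)/n)$ for $\nu=0,\dots,n-1$, so that the intervals $I_\nu$ tile $[0,\pi)$ and each has length $h=\pi/n$.

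The key step is a pointwise-to-average comparison. For any $x\in I_\nu$, the fundamental theorem of calculus gives $|T(\pi\nu/n)|\le |T(x)|+\int_{I_\nu}|T'(t)|\,dt$, since $\pi\nu/n$ is the left endpoint of $I_\nu$. Integrating this in $x$ over $I_\nu$ (a set of measure $h$) yields
$$
h\,\Big|T\Big(\tfrac{\pi\nu}{n}\Big)\Big|\le \int_{I_\nu}|T(x)|\,dx+h\int_{I_\nu}|T'(t)|\,dt .
$$
Summing over $\nu=0,\dots,n-1$ and using that the $I_\nu$ tile $[0,\pi)$, together with $\int_0^\pi|T|\le\|T\|_{L(\T)}$ and $\int_0^\pi|T'|\le\|T'\|_{L(\T)}$, we obtain
$$
h\sum_{\nu=0}^{n-1}\Big|T\Big(\tfrac{\pi\nu}{n}\Big)\Big|\le \|T\|_{L(\T)}+h\,\|T'\|_{L(\T)}\le \|T\|_{L(\T)}+h\,n\,\|T\|_{L(\T)}=(1+\pi)\|T\|_{L(\T)},
$$
where the middle inequality is Bernstein's inequality and the final equality uses $hn=\pi$. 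Since $h=\pi/n$, dividing through gives $\frac1n\sum_{\nu}|T(\pi\nu/n)|=\frac1\pi\,h\sum_\nu|T(\pi\nu/n)|\le \frac{1+\pi}{\pi}\|T\|_{L(\T)}$, which is the claim.

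There is essentially no serious obstacle here; the proof is short and self-contained. The only points that require care are, first, that Bernstein's inequality be used in the $L(\T)$ (i.e.\ $L^1$) norm rather than the sup norm — this is exactly what the cited estimate provides — and, second, that the sampling spacing $h=\pi/n$ be matched to the degree $n$ so that the factor $h$ precisely absorbs the degree growth $\|T'\|_{L(\T)}\le n\|T\|_{L(\T)}$, leaving an absolute constant. I note that one could alternatively first bound the half-period sum by the full-period sum over the $2n$ equally spaced points $\pi\nu/n$, $\nu=0,\dots,2n-1$ (adding nonnegative terms), and then apply the classical Marcinkiewicz--Zygmund upper bound for that grid; but the direct interval argument above is cleaner and avoids any appeal to the general theorem.
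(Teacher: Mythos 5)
Your proof is correct. Note, however, that the paper does not actually prove this lemma: it simply recalls it as the well-known Marcinkiewicz--Zygmund inequality with a citation to \cite[4.3.3]{TB}, so any complete argument is "different from the paper's" by default. What your argument buys is self-containedness: the pointwise-to-average comparison $|T(\pi\nu/n)|\le |T(x)|+\int_{I_\nu}|T'|$ for $x\in I_\nu$, integrated over $I_\nu$ and summed, reduces everything to the $L^1$ Bernstein inequality $\|T'\|_{L(\T)}\le n\|T\|_{L(\T)}$, which the paper already invokes (with the same reference \cite[Ch.~10, \S~3]{Z}) in the proof of Lemma~\ref{le3}; the bookkeeping is right, since the spacing $h=\pi/n$ exactly absorbs the factor $n$ from Bernstein and yields the explicit constant $(1+\pi)/\pi$. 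The steps all check out: the intervals $I_\nu$ tile $[0,\pi)$, $\int_0^\pi|T|\le\|T\|_{L(\T)}$, and restricting to the half-period grid is harmless for an upper bound. The general theorem cited in \cite{TB} gives more (the converse estimate and $L^p$ versions), none of which is needed here. One small remark worth making explicit: in the paper the lemma is repeatedly applied to shifted grids, e.g.\ to sums of $|F_{mn}(x+x_\mu,\cdot)|$; this follows from the stated version by applying it to the translate $T(\cdot+x)$, which has the same degree and the same $L(\T)$ norm --- and indeed your argument, which uses only the spacing of the nodes and not their location, proves the shifted version verbatim.
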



\begin{lemma}\label{le4}
  Let $m,n\in \N$ be such that $3\le m\le n$. Then
  \begin{equation}\label{le4.1}
    \|D_{mn}\|_{L([0,\pi)^2)}=\|S_{mn}\|_{L([0,\pi)^2)}+\|F_{mn}\|_{L([0,\pi)^2)}+\mathcal{O}(\log\log n \log m).
  \end{equation}
If, additionally, $|x|\le {\pi}/{(2m)}$ and $|y|\le {\pi}/{(2n)}$, then
    \begin{equation}\label{le4.2}
    \begin{split}
          \|(D_{mn}(x+x_\mu,y+y_\nu))\|_{\ell^{mn}}&=\|(S_{mn}(x+x_\mu,y+y_\nu))\|_{\ell^{mn}}\\
          &+\|(F_{mn}(x+x_\mu,y+y_\nu))\|_{\ell^{mn}}+\mathcal{O}(\log\log n \log m).
    \end{split}
  \end{equation}
Moreover, if $m$ and $n$ are such that $n=\l m+p$, $p<m$, $\l,p\in \N$. Then
  \begin{equation}\label{le4.1+}
    \|D_{mn}\|_{L([0,\pi)^2)}=\|S_{mn}\|_{L([0,\pi)^2)}+\mathcal{O}\(\log m+p\log \frac mp\)
  \end{equation}
and
    \begin{equation}\label{le4.2+}
          \|(D_{mn}(x+x_\mu,y+y_\nu))\|_{\ell^{mn}}=\|(S_{mn}(x+x_\mu,y+y_\nu))\|_{\ell^{mn}}+\mathcal{O}\(\log m+p\log \frac mp\).
  \end{equation}
\end{lemma}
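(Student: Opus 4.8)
The plan is to reduce everything to the pointwise decomposition $D_{mn}=S_{mn}-F_{mn}+R_{mn}$ furnished by Lemma~\ref{le1}, and to show that at the level of $L^1$ (respectively $\ell^{mn}$) norms the remainder $R_{mn}$ is negligible while the cross interaction between $S_{mn}$ and $F_{mn}$ is small. First I would use \eqref{le2.3} and the triangle inequality to replace $\|D_{mn}\|_{L([0,\pi)^2)}$ by $\|S_{mn}-F_{mn}\|_{L([0,\pi)^2)}$ at the cost of an error $\mathcal{O}(\log m)$, which is absorbed into $\mathcal{O}(\log\log n\log m)$ for large $n$ (the finitely many small cases being trivial). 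Thus \eqref{le4.1} reduces to
\[
\|S_{mn}-F_{mn}\|_{L([0,\pi)^2)}=\|S_{mn}\|_{L([0,\pi)^2)}+\|F_{mn}\|_{L([0,\pi)^2)}+\mathcal{O}(\log\log n\log m).
\]

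For this separation I would invoke the elementary pointwise inequality
\[
\big|\,|A-B|-|A|-|B|\,\big|\le 2\min(|A|,|B|)\qquad(A,B\in\R),
\]
which, integrated with $A=S_{mn}$ and $B=F_{mn}$, yields
\[
\Big|\,\|S_{mn}-F_{mn}\|_{L([0,\pi)^2)}-\|S_{mn}\|_{L([0,\pi)^2)}-\|F_{mn}\|_{L([0,\pi)^2)}\,\Big|\le 2\int_{[0,\pi)^2}\min\big(|S_{mn}|,|F_{mn}|\big)\,dx\,dy.
\]
Hence the whole statement \eqref{le4.1} rests on the overlap estimate $\int_{[0,\pi)^2}\min(|S_{mn}|,|F_{mn}|)=\mathcal{O}(\log\log n\log m)$, and \emph{this is the main obstacle}. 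To control it I would exploit the geometry of $S_{mn}$: by \eqref{S} its size is governed by the product $|D_m(x+\tfrac nm y)|\,|D_m(x-\tfrac nm y)|$, so $|S_{mn}|$ is large only in thin neighbourhoods of the ridge lines $x\pm\tfrac nm y\equiv0\pmod{2\pi}$ and decays like the reciprocal of the distance to them. Splitting $[0,\pi)^2$ into these ridge strips and their complement, one bounds $\min(|S_{mn}|,|F_{mn}|)\le|S_{mn}|$ off the ridges (using the decay of the Dirichlet factors) and $\min(|S_{mn}|,|F_{mn}|)\le|F_{mn}|$ on the ridges (estimating $\int|F_{mn}|$ over measure-small strips). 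The borderline loss $\log\log n$ is the delicate point; it is expected to arise from organising dyadically the contributions of the many ridge crossings of the square, and it is exactly here that the techniques of~\cite{KL,KLi} enter.

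The refined formula \eqref{le4.1+} does \emph{not} require the overlap estimate. Here I would keep only the two-term inequality $\big|\|S_{mn}-F_{mn}\|-\|S_{mn}\|\big|\le\|F_{mn}\|$ and bound $\|F_{mn}\|$ directly. When $n=\l m+p$ one has $\{-\tfrac nm k\}=\{-\tfrac pm k\}$, so by \eqref{F} and the product-to-sum formula $F_{mn}$ is expressible through the one-variable polynomial $\mathcal{F}_{mp}$ evaluated at $x\pm\tfrac nm y$ modulated by $e^{iny}$, giving the pointwise domination
\[
|F_{mn}(x,y)|\le 2\big(|\mathcal{F}_{mp}(x+\tfrac nm y)|+|\mathcal{F}_{mp}(x-\tfrac nm y)|\big).
\]
Integrating in $x$ first, using periodicity, and applying \eqref{le3.2} yields $\|F_{mn}\|_{L([0,\pi)^2)}\lesssim p\log\tfrac mp$; combined with the $\mathcal{O}(\log m)$ from $R_{mn}$ this proves \eqref{le4.1+}.

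For the discrete statements \eqref{le4.2} and \eqref{le4.2+} I would run the identical scheme with $\|\cdot\|_{\ell^{mn}}$ in place of $\|\cdot\|_{L([0,\pi)^2)}$, the pointwise inequality above being unchanged. Thus one needs the discrete remainder bound $\|(R_{mn}(x+x_\mu,y+y_\nu))\|_{\ell^{mn}}\lesssim\log m$, obtained by direct estimation from \eqref{R} in parallel with \eqref{le2.3}, and the discrete overlap bound $\|(\min(|S_{mn}|,|F_{mn}|)(x+x_\mu,y+y_\nu))\|_{\ell^{mn}}=\mathcal{O}(\log\log n\log m)$; the hypotheses $|x|\le\pi/(2m)$ and $|y|\le\pi/(2n)$ keep the shifted grid uniformly away from the singular lines of the kernels, so that the Riemann-sum comparison with the continuous estimates is controlled. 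For the $F_{mn}$ term in \eqref{le4.2+}, fixing $y$ and applying the Marcinkiewicz--Zygmund inequality (Lemma~\ref{lemz}) in the $x$-variable, where $F_{mn}(\cdot,y)$ has degree $\le m$ and the grid $\{x_\mu\}$ has $m$ points, gives $\|(F_{mn})\|_{\ell^{mn}}\lesssim\sup_{y}\|F_{mn}(\cdot,y)\|_{L(\T)}$, which by the domination above and \eqref{le3.2} is again $\lesssim p\log\tfrac mp$; together with the discrete $R_{mn}$ bound this yields \eqref{le4.2+}.
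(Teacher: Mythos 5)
Your overall skeleton coincides with the paper's: the decomposition $D_{mn}=S_{mn}-F_{mn}+R_{mn}$ from Lemma~\ref{le1}, discarding $R_{mn}$ via \eqref{le2.3}, reducing \eqref{le4.1} to an overlap estimate between $|S_{mn}|$ and $|F_{mn}|$, and the proofs of \eqref{le4.1+}, \eqref{le4.2+} via $\{-\tfrac nm k\}=\{-\tfrac pm k\}$, the modulation identity, \eqref{le3.2} and Lemma~\ref{lemz} are all correct and essentially what the paper does. The genuine gap is that the overlap estimate $\int\min(|S_{mn}|,|F_{mn}|)=\mathcal{O}(\log\log n\log m)$ --- which you yourself identify as ``the main obstacle'' --- is never proved: the decisive $\log\log n$ is deferred to an unspecified ``dyadic organisation of ridge crossings'' and to the techniques of \cite{KL,KLi}. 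Moreover, the ridge-strip dichotomy you propose cannot work as stated, for two reasons. First, $F_{mn}$ is not small on the ridges of $S_{mn}$: from
$$
F_{mn}(x,y)=2\Re\Big(e^{iny}\mathcal{F}_{mp}\big(x+\tfrac nm y\big)\Big)+2\Re\Big(e^{-iny}\mathcal{F}_{mp}\big(x-\tfrac nm y\big)\Big),\qquad p=n\bmod m,
$$
and $\mathcal{F}_{mp}(0)=(m-\gcd(m,p))/2\sim m/2$, one sees that $F_{mn}$ has Dirichlet-type peaks of height $\sim m$ on exactly the same lines $x\pm\tfrac nm y\in 2\pi\Z$, so there is no geometric separation of the two kernels in the square. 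Second, off the ridges the bound $\min\le|S_{mn}|$ is far too lossy: estimating $|S_{mn}|$ by the product $\tfrac 2y|D_m(x+\tfrac nm y)||D_m(x-\tfrac nm y)|$ with reciprocal-distance decay of each factor and integrating, each of the $\sim n/m$ ridge periods at height $y\approx \pi m j/n$ contributes about $\log^2 m/j$, so the off-ridge region (dominated by small $y$, where $1/y$ is large) yields a bound of order $\log^2 m\,\log\tfrac{n}{m\log n}$, which vastly exceeds $\mathcal{O}(\log\log n\log m)$ when $n\gg m$.

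The paper's separation is one-dimensional and avoids all of this: split at $y=1/\log n$. For $y\le 1/\log n$ one uses $\min\le|F_{mn}|$ together with the uniform-in-$y$ bound $\int_{\T}|F_{mn}(x,y)|\,dx\le 4\|\mathcal{F}_{mp}\|_{L(\T)}\lesssim\log^2 m$ from \eqref{le3.1}, so this region contributes $\lesssim\log^2 m/\log n\le\log m$; for $y\ge 1/\log n$ one uses $\min\le|S_{mn}|$ together with the representation \eqref{S2}, which converts the product of two Dirichlet kernels into a \emph{sum} of single kernels divided by $y$ --- precisely the cancellation your product/ridge picture forfeits (even at large $y$ the product bound loses a factor $\log m$) --- giving $\int_{\T}|S_{mn}(x,y)|\,dx\lesssim\log m/y$ and hence the factor $\log\log n$ simply from $\int_{1/\log n}^{\pi}dy/y$. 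No dyadic machinery is needed. The same $y$-split combined with Lemma~\ref{lemz} slice by slice settles the discrete statements \eqref{le4.2} and \eqref{le4.2+}, for which your ``Riemann-sum comparison'' is again only asserted, not carried out.
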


\begin{proof}
Using~\eqref{le3.1}, we have
\begin{equation*}
  \begin{split}
      \int_0^\pi dx \int_0^{\frac1{\log n}}|F_{mn}(x,y)|dy\le 4\int_0^{\frac1{\log n}} dy \int_\T \bigg|\sum_{k=1}^m \Big\{-\frac{nk}{m}\Big\}e^{ikx}\bigg|dx\lesssim \log m.
   \end{split}
\end{equation*}
This along with~\eqref{le1.1} and~\eqref{le2.3} implies
\begin{equation}\label{11}
  \begin{split}
      \int_0^\pi dx \int_0^{\frac1{\log n}}|D_{mn}(x,y)|dy=\int_0^\pi dx &\int_0^{\frac1{\log n}}|S_{mn}(x,y)|dy\\
      &+\int_0^\pi dx \int_0^{\frac1{\log n}}|F_{mn}(x,y)|dy+\mathcal{O}(\log m).
  \end{split}
\end{equation}
At the same time using~\eqref{S2} and~\eqref{le2.1}, we get
\begin{equation*}
  \int_0^\pi dx \int_{\frac1{\log n}}^\pi|S_{mn}(x,y)|dy\lesssim  \log\log n \int_0^\pi |D_m(x)|dx\lesssim \log m\log\log n.
\end{equation*}
As above, this along with~\eqref{le1.1} and~\eqref{le2.3} implies
\begin{equation}\label{13}
  \begin{split}
      \int_0^\pi dx \int_{\frac1{\log n}}^\pi|D_{mn}(x,y)|dy&=\int_0^\pi dx \int_{\frac1{\log n}}^\pi|S_{mn}(x,y)|dy\\
      &+\int_0^\pi dx \int_{\frac1{\log n}}^\pi|F_{mn}(x,y)|dy+\mathcal{O}(\log m\log\log n).
  \end{split}
\end{equation}
Thus, combining~\eqref{11} and~\eqref{13}, we obtain~\eqref{le4.1}.

The proof of~\eqref{le4.2} is similar.
Using Lemma~\ref{lemz} and~\eqref{le3.1}, we derive after simple calculations that
\begin{equation}\label{14}
  \begin{split}
    \frac1{mn}\sum_{\mu=0}^{m-1}\sum_{\nu=0}^{[\frac n{\log n}]-1}|F_{mn}(x+x_\mu,y+y_\nu)|\lesssim \frac1n \sum_{\nu=0}^{[\frac n{\log n}]-1} \int_{-\pi}^\pi \bigg|\sum_{k=1}^m \Big\{-\frac{nk}{m}\Big\}e^{ikt}\bigg|dt\lesssim \log m.
  \end{split}
\end{equation}
Next, using~\eqref{S2}, \eqref{le2.1}, and Lemma~\ref{lemz}, we obtain
\begin{equation}\label{15}
  \begin{split}
     \frac1{mn}&\sum_{\nu=[\frac n{\log n}]}^{n-1}\sum_{\mu=0}^{m-1}|S_{mn}(x+x_\mu,y+y_\nu)|\\
     &\le \frac{2}{mn}\sum_{\nu=[\frac n{\log n}]}^{n-1}\frac1{|y+y_\nu|}\sum_{\mu=0}^{m-1}\bigg(\Big|D_{m}\Big(x-\frac{n}{m}y+x_{\mu-\nu}\Big)\Big|\\
     &\qquad\qquad\qquad\qquad\qquad\qquad+\Big|D_{m}\Big(x+\frac{n}{m}y+x_{\mu+\nu}\Big)\Big|+|\sin n(y+y_\nu)| \bigg)\\
     &\lesssim \sum_{\nu=[\frac n{\log n}]}^{n-1} \frac1\nu \int_\T \bigg(\Big|D_{m}\Big(x-\frac{n}{m}y+t-x_{\nu}\Big)\Big|
     +\Big|D_{m}\Big(x+\frac{n}{m}y+t+x_{\nu}\Big)\Big|+1 \bigg)dt\\
     &\lesssim \log\log n\log m.
  \end{split}
\end{equation}
Thus, combining~\eqref{14} and~\eqref{15} with~\eqref{le1.1} and using the same arguments as in  the proof of~\eqref{le4.1}, we get~\eqref{le4.2}.


The proofs of~\eqref{le4.1+} and~\eqref{le4.2+}, follow from equality~\eqref{le1.1}, inequalities~\eqref{le2.3}, \eqref{le3.2}, and the Marcinkiewicz-Zygmund inequality given in Lemma~\ref{lemz}, which shows that $\sup_{x,y}\|(F_{mn}(x+x_\mu,y+y_\nu))\|_{\ell^{mn}}\lesssim p\log \frac mp$.
\end{proof}

We need the following additional notations:
$$
\D_m^{(1)}(x,y)=\frac12S_{mm}(x,y),
$$
$$
\D_m^{(2)}(x,y)=D_{m}(x-y)\sin\(\frac{m(x+y)}{2}\),
$$
$$
A_m=\{(\mu,\nu)\in \Z_+^2\,:\, \mu,\nu=0,\dots, m-1,\,|\mu-\nu|\ge 2\},
$$
and
$$
B_m=\{(\mu,\nu)\in A_m\,:\, \mu+\nu = 0 \pmod 2\,\}.
$$

\begin{lemma}\label{leKu}
Let $m\in \N$, $m\ge 3$. Then
\begin{align}
    &\|\D_m^{(1)}\|_{L([0,\pi)^2)}=\frac8{\pi^2}\log^2 m+\mathcal{O}(\log m),\label{leKu1+}
    \\
    &\|\D_m^{(2)}\|_{L([0,\pi)^2)}=\frac{16}{\pi}\log m+\mathcal{O}(1).\label{leKu2+}
\end{align}
If, additionally,  $|x|\le \pi/(2m)$ and $|y|\le \pi/(2m)$. Then
\begin{align}
    &\frac1{m^2}\sum\limits_{A_m}|\D_m^{(1)}(x+x_\mu,y+x_\nu)|=\frac1{\pi^2}\Phi_{mm}(x,y)\log^2 m+\mathcal{O}(\log m),\label{leKu1}
    \\
    &\frac1{m^2}\sum\limits_{A_m}|\D_m^{(2)}(x+x_\mu,y+x_\nu)|=\frac2{\pi}\Phi_{mm}(x,y)\log m+\mathcal{O}(1),\label{leKu2}
    \\
    &\frac1{m^2}\sum\limits_{B_m}|\D_m^{(1)}(x+x_\mu,y+x_\nu)|=\frac{1}{\pi^2}\Big|\sin{\frac{m(x+y)}{2}}\sin\frac{m(x-y)}{2}\Big|\log^2 m+\mathcal{O}(\log m),\label{leKu3}
    \\
    &\frac1{m^2}\sum\limits_{B_m}|\D_m^{(2)}(x+x_\mu,y+x_\nu)|=\frac{2}{\pi}\Big|\sin\frac{m(x+y)}{2}\sin\frac{m(x-y)}{2}\Big|\log m+\mathcal{O}(1),\label{leKu4}
\end{align}
where $\Phi_{mm}$ is defined in~\eqref{phi}.
\end{lemma}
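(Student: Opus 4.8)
The plan is to exploit the product structure of both kernels after the rotation $u=x+y$, $v=x-y$ (Jacobian $\tfrac12$). Since $\D_m^{(1)}(x,y)=\tfrac1y\sin y\,D_m(x+y)D_m(x-y)$ (this is $\tfrac12 S_{mm}$ with $S$ from~\eqref{S}) carries \emph{two} Dirichlet factors, one in $u$ and one in $v$, it should produce $\log^2 m$, whereas $\D_m^{(2)}(x,y)=D_m(x-y)\sin\tfrac{m(x+y)}2$ carries a single Dirichlet factor in $v$ against a bounded oscillation in $u$, and should produce only $\log m$. In the rotated variables the weight becomes $\tfrac{\sin y}{y}=\tfrac{\sin\frac{u-v}2}{\frac{u-v}2}$, a Lipschitz factor equal to $1$ on $u=v$ and, crucially, \emph{vanishing} at $u-v=\pm2\pi$. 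Throughout I would use the elementary one-dimensional fact $\int_0^\pi|D_m|=\tfrac4\pi\log m+\mathcal{O}(1)$ (the sharp form of~\eqref{le2.1}) and the replacement of $|\sin\tfrac{mt}2|$ by its mean value $\tfrac2\pi$, valid up to an error carrying no logarithm.

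For the integral formula~\eqref{leKu1+}, the product $|D_m(u)D_m(v)|$ peaks where a peak line of each factor meets, i.e.\ at the two corners $(u,v)=(0,0)$ and $(2\pi,0)$ of the image diamond; but at the second corner $u-v=2\pi$, so the weight $\tfrac{\sin\frac{u-v}2}{\frac{u-v}2}$ vanishes and that corner contributes nothing. Hence the $\log^2 m$ is localized at $(0,0)$, where the domain constraint $\mu,\nu\ge0$ reads $|v|\le u$; integrating the tails gives $\int_{1/m}^{\delta}\tfrac{4}{\pi u}\big(\tfrac8\pi\log(mu)\big)\,du=\tfrac{32}{\pi^2}\!\int\!\tfrac{\log(mu)}u\,du=\tfrac{16}{\pi^2}\log^2 m+\mathcal{O}(\log m)$, and the prefactor $\tfrac12$ yields $\tfrac{8}{\pi^2}\log^2 m$. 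For~\eqref{leKu2+} there is no second Dirichlet factor, so the $u$-integral only contributes the constant $\int_0^{2\pi}|\sin\tfrac{mu}2|\,du=4$ at $v=0$, and $\tfrac12\cdot4\cdot\tfrac8\pi\log m=\tfrac{16}\pi\log m$.

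For the discrete sums~\eqref{leKu1}--\eqref{leKu4} the decisive point is that sampling replaces these continuous means by the phase function $\Phi_{mm}$ of~\eqref{phi}. Writing $s=\mu+\nu$, $j=\mu-\nu$ (so $s\equiv j\pmod2$, and $\mu,\nu\ge0$ forces $|j|\le s$), the identity $\sin(\theta+\tfrac{\pi k}2)=\pm\sin\theta$ or $\pm\cos\theta$ turns $|D_m(x-y+x_{\mu-\nu})|$ into $\tfrac{2m}{\pi|j|}|\sin\tfrac{m(x-y)}2|$ for even $j$ and $\tfrac{2m}{\pi|j|}|\cos\tfrac{m(x-y)}2|$ for odd $j$ (in the tail $|j|\ge2$), and likewise $|D_m(x+y+x_{\mu+\nu})|$ into $\tfrac{2m}{\pi s}|\sin\tfrac{m(x+y)}2|$ or $\tfrac{2m}{\pi s}|\cos\tfrac{m(x+y)}2|$. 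Because $s$ and $j$ have the same parity, even indices pair $|\sin|$ with $|\sin|$ and odd indices pair $|\cos|$ with $|\cos|$; with the weight $\tfrac{\sin(y+x_\nu)}{y+x_\nu}$ again $\approx1$ near $(0,0)$ and vanishing near $\mu+\nu\approx2m$, the double sum $\sum_{2\le|j|\le s}\tfrac1{s|j|}\sim\tfrac{\log^2 m}4$ in each parity class, so the prefactor $\tfrac4{\pi^2}$ times the two classes assembles exactly $|\sin\tfrac{m(x+y)}2\sin\tfrac{m(x-y)}2|+|\cos\tfrac{m(x+y)}2\cos\tfrac{m(x-y)}2|=\Phi_{mm}(x,y)$, giving~\eqref{leKu1}, while the restriction to $B_m$ (even $\mu+\nu$, hence even $j$) keeps only the $|\sin\,\sin|$ term, giving~\eqref{leKu3}. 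The single-Dirichlet kernel $\D_m^{(2)}$ produces only the one-dimensional sum $\tfrac2\pi\sum_{2\le|j|}\tfrac1{|j|}(\cdots)$, whence the $\log m$-rate formulas~\eqref{leKu2} and~\eqref{leKu4}.

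The leading constants being as above, the real work is uniform error control. I expect the main obstacles to be: (i) justifying the tail approximation $|D_m(x-y+x_{\mu-\nu})|\approx\tfrac{2m}{\pi|\mu-\nu|}|\sin\tfrac{m(x-y)}2|$ together with the mean-value replacements so that the errors survive summation without generating a spurious logarithm --- this is exactly why the near-diagonal indices $|\mu-\nu|\le1$, where $D_m$ attains its $\mathcal{O}(m)$ peak, must be excised (the set $A_m$); (ii) showing that the off-corner and cross regions, as well as the $\tfrac2y\sin my$ term hidden in~\eqref{S2}, contribute only $\mathcal{O}(\log m)$ and $\mathcal{O}(1)$, for which~\eqref{le2.1} and the Marcinkiewicz--Zygmund inequality (Lemma~\ref{lemz}) are the natural tools; and (iii) verifying that all these estimates are uniform for $|x|,|y|\le\pi/(2m)$, in particular where the weight $\tfrac{\sin(y+x_\nu)}{y+x_\nu}$ is largest. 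The overall scheme parallels, and quantitatively refines, the computations of~\cite{Ku,Ku2}.
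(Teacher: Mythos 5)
The first thing to note is that the paper does not actually prove Lemma~\ref{leKu}: its entire proof is a citation, equalities \eqref{leKu1+}--\eqref{leKu2+} being taken from~\cite{Ku} and relations \eqref{leKu1}--\eqref{leKu4} from~\cite{Ku2}. Your attempt at a direct derivation therefore has to be measured against those original computations, and at the level of mechanism it reconstructs them correctly: the factorization $\D_m^{(1)}(x,y)=\frac{\sin y}{y}D_m(x+y)D_m(x-y)$ versus the single Dirichlet factor in $\D_m^{(2)}$, which is what separates the $\log^2 m$ rate from the $\log m$ rate; the localization of the leading term at the corner where the weight $\frac{\sin\frac{u-v}{2}}{(u-v)/2}$ equals $1$ rather than at the corner where it vanishes; and, for the discrete sums, the parity pairing of $s=\mu+\nu$ and $j=\mu-\nu$ (which always have the same parity), which is exactly why the combination $|\sin\sin|+|\cos\cos|=\Phi_{mm}$ emerges over $A_m$ while the restriction to $B_m$ (even $s$, hence even $j$) retains only the $|\sin\sin|$ term. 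Your leading constants $\frac{8}{\pi^2}$, $\frac{16}{\pi}$, $\frac{1}{\pi^2}$, $\frac{2}{\pi}$ all come out right, and your explanation of why $A_m$ must excise $|\mu-\nu|\le 1$ is the correct one.

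The genuine gap is that nothing beyond this leading-order bookkeeping is proved, and for asymptotic formulas with remainders $\mathcal{O}(\log m)$ and $\mathcal{O}(1)$ the error control \emph{is} the theorem. Every display in your sketch rests on a replacement with an unquantified error: $|\sin\frac{mt}{2}|$ by its mean $\frac{2}{\pi}$, $|D_m(u)|$ by $\frac{4}{\pi u}$, the denominators $\sin\big(\frac{x\pm y}{2}+\frac{\pi k}{2m}\big)$ by $\frac{\pi k}{2m}$ (legitimate only for $k=o(m)$; over the full range of $s$ and $j$ the sine must be kept and the accumulated difference shown summable), and the weight $\frac{\sin(y+x_\nu)}{y+x_\nu}$ by $1$. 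The last replacement illustrates why this is delicate: the weight is $1+\mathcal{O}(b^2)$ with $b=y+x_\nu$, and a crude ``weight is approximately $1$ on the $\log^2$-producing region'' argument leaves an error of size $\epsilon^2\log^2 m$ on the region $u\le\epsilon$, which is not $\mathcal{O}(\log m)$ for fixed $\epsilon$; one must instead integrate the deviation $\mathcal{O}(b^2)$ against the singularity $\frac{1}{u|v|}$ and exploit that the deviation is only large where the singularity is weak. You explicitly defer all of this (``the real work is uniform error control'', obstacles (i)--(iii)), so what you have is a correct program rather than a proof; executing it uniformly in $|x|,|y|\le\pi/(2m)$ would essentially amount to reproducing the computations of~\cite{Ku} and~\cite{Ku2} --- which is precisely the work the paper's citation-style proof spares itself.
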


\begin{proof}
Equalities~\eqref{leKu1+} and~\eqref{leKu2+} can be found in~\cite{Ku}. The proofs of relations~\eqref{leKu1}--\eqref{leKu4} are given in~\cite{Ku2}.
\end{proof}

\begin{lemma}\label{le5}
Let $m,n\in \N$ be such that $3\le m\le n$. Then
\begin{equation}\label{le5.1}
\|S_{mn}\|_{L([0,\pi)^2)}=\frac{16}{\pi^2}(2\log m\log n-\log^2 m)+\mathcal{O}(\log n).
\end{equation}
If, additionally, $|x|\le {\pi}/{(2m)}$ and $|y|\le {\pi}/{(2n)}$, then
    \begin{equation}\label{le5.2}
    \begin{split}
          \|(S_{mn}(x+x_\mu,&y+y_\nu))\|_{\ell^{mn}}=\frac{2}{\pi^2}\Phi_{mn}(x,y)(2\log m\log n-\log^2 m)+\mathcal{O}(\log n).
    \end{split}
  \end{equation}
\end{lemma}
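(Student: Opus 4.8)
The plan is to strip away the anisotropy by the substitution $t=\tfrac nm y$, which turns the single kernel $S_{mn}$ into the diagonal kernel $S_{mm}$ read off over roughly $n/m$ periods, and then to evaluate each period by the $m=n$ asymptotics collected in Lemma~\ref{leKu}. From definition~\eqref{S} one has the exact scaling $S_{mn}(x,\tfrac mn t)=\tfrac nm S_{mm}(x,t)$, whence
\[
\|S_{mn}\|_{L([0,\pi)^2)}=\int_0^\pi\!\!\int_0^{\frac nm\pi}|S_{mm}(x,t)|\,dt\,dx .
\]
For the discrete norm, using $\tfrac nm(y+y_\nu)=\tfrac nm y+\tfrac{\pi\nu}m$ and writing $\nu=qm+r$ with $0\le r\le m-1$ (so that $\tfrac{\pi\nu}m=q\pi+x_r$), the same scaling gives
\[
\|(S_{mn}(x+x_\mu,y+y_\nu))\|_{\ell^{mn}}=\sum_{q\ge0}\frac1{m^2}\sum_{\mu=0}^{m-1}\sum_{r}\Big|S_{mm}\big(x+x_\mu,\tfrac nm y+q\pi+x_r\big)\Big| .
\]
The hypothesis $|y|\le\pi/(2n)$ yields $|\tfrac nm y|\le\pi/(2m)$, so the point $\tfrac nm y$ is admissible in Lemma~\ref{leKu}; together with the elementary identity $\Phi_{mm}(x,\tfrac nm y)=\Phi_{mn}(x,y)$ this already explains the appearance of $\Phi_{mn}$ in~\eqref{le5.2}.

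I would treat the fundamental block $t\in[0,\pi)$ (equivalently $q=0$) separately, since there the factor $2/t$ in~\eqref{S} is genuinely singular and produces the $\log^2 m$ term. As $\D_m^{(1)}=\tfrac12 S_{mm}$, relation~\eqref{leKu1+} gives $\int_0^\pi\!\int_0^\pi|S_{mm}|=\tfrac{16}{\pi^2}\log^2 m+\mathcal{O}(\log m)$, while in the discrete case~\eqref{leKu1} gives $\tfrac2{\pi^2}\Phi_{mm}(x,\tfrac nm y)\log^2 m+\mathcal{O}(\log m)=\tfrac2{\pi^2}\Phi_{mn}(x,y)\log^2 m+\mathcal{O}(\log m)$. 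The near-diagonal indices $|\mu-r|<2$ omitted in $A_m$ contribute only $\mathcal{O}(\log m)$ and may be discarded.

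On each remaining block ($t\in[j\pi,(j+1)\pi)$, $j\ge1$, resp.\ $q\ge1$) the factor $2/t$ is bounded and may be replaced by $2/(j\pi)$, the replacement error being $\mathcal{O}(j^{-2})$ times a per-block mass of size $\mathcal{O}(\log m)$, which sums to an admissible $\mathcal{O}(\log m)$. For the remaining mass of $\tfrac t2 S_{mm}(x,t)=D_m(x+t)D_m(x-t)\sin t$ I would use representation~\eqref{S2} with $n=m$: it exhibits this quantity as $\D_m^{(2)}(x,t)-\D_m^{(2)}(x,-t)+\sin mt$ up to a slowly varying phase that is negligible on the support of each Dirichlet peak. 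Hence the per-block mass is governed by~\eqref{leKu2+} in the continuous case and by~\eqref{leKu2} in the discrete case (after reducing the shift $q\pi$ modulo $2\pi$), giving $\tfrac{16}\pi\log m$, resp.\ $\tfrac2\pi\Phi_{mn}(x,y)\log m$, per block. Multiplying by $2/(j\pi)$ and summing the harmonic series $\sum_{1\le j\le n/m}j^{-1}=\log\tfrac nm+\mathcal{O}(1)$ produces the cross terms $\tfrac{32}{\pi^2}\log m\log\tfrac nm$ and $\tfrac4{\pi^2}\Phi_{mn}(x,y)\log m\log\tfrac nm$. Adding the fundamental block and using $\log\tfrac nm=\log n-\log m$ yields exactly~\eqref{le5.1} and~\eqref{le5.2}; the incomplete last block and the non-integrality of $n/m$ cost only $\mathcal{O}(\log m)\subseteq\mathcal{O}(\log n)$.

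The main obstacle is the honest bookkeeping behind the claim that a single Dirichlet peak of $\D_m^{(2)}$ already reproduces the \emph{full} $\Phi_{mn}$. As the grid index runs transversally across the peak, both the modulating factor $\sin\tfrac{m(x+y)}2$ and the sampled Dirichlet envelope alternate between sine and cosine, so that the even transversal indices generate the $|\sin\tfrac{mx+ny}2\sin\tfrac{mx-ny}2|$ part and the odd ones the $|\cos\tfrac{mx+ny}2\cos\tfrac{mx-ny}2|$ part; this is precisely the content of~\eqref{leKu2}, and it is what rescues the correct constant $\tfrac4{\pi^2}$ in the discrete cross term (the parity-restricted variants~\eqref{leKu3},~\eqref{leKu4} over $B_m$ are not needed for Lemma~\ref{le5} itself). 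Controlling these parity-dependent resonances and the shifts $D_m(z+q\pi)$ uniformly, while keeping the accumulated errors from the replacement $2/t\approx2/(j\pi)$, from the slowly varying phases, and from the omitted near-diagonal indices all within $\mathcal{O}(\log n)$ over the $\mathcal{O}(n/m)$ blocks, is the delicate part; everything else follows from Lemma~\ref{leKu} and the Marcinkiewicz--Zygmund inequality of Lemma~\ref{lemz}.
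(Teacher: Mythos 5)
Your proposal is correct and follows essentially the same route as the paper: the rescaling $S_{mn}(x,\tfrac mn t)=\tfrac nm S_{mm}(x,t)$, the decomposition into the fundamental block (handled by \eqref{leKu1+}/\eqref{leKu1}), the middle blocks with $2/t$ replaced by $2/(j\pi)$ and per-block mass identified with $\D_m^{(2)}$ (handled by \eqref{leKu2+}/\eqref{leKu2}), the harmonic summation giving the cross term, and the $\mathcal{O}(\log m)$ leftover block. The only cosmetic difference is that you pass to $\D_m^{(2)}$ via \eqref{S2} and a "slowly varying phase" estimate, while the paper uses the identity \eqref{o2} together with a vanishing-integral trick; these amount to the same computation.
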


\begin{proof}
 We have
  \begin{equation}\label{16}
    \begin{split}
      \|S_{mn}\|_{L([0,\pi)^2)}&=\int_0^\pi dx\int_0^{\frac nm\pi}|S_{mm}(x,y)|dy\\
      &=\int_0^\pi \int_0^{\pi}|\dots|+\int_0^\pi \int_{\pi}^{[\frac nm]\pi}|\dots|+\int_0^\pi \int_{[\frac nm]\pi}^{\frac nm\pi}|\dots|=I_0+I_1+I_2.
    \end{split}
  \end{equation}
Using~\eqref{leKu1+}, we get
  \begin{equation}\label{I0}
    \begin{split}
I_0=2\|\D_m^{(1)}\|_{L([0,\pi)^2)}=\frac{16}{\pi^2}\log^2 m+\mathcal{O}(\log m).
    \end{split}
  \end{equation}
Consider $I_1$. Denoting $l=[\frac nm]$, we derive
\begin{equation}\label{I1}
  \begin{split}
    I_1&=\sum_{k=1}^{l-1}\int_{0}^\pi dx\int_{k\pi}^{^{(k+1)\pi}}|S_{mm}(x,y)|dy\\
    &=2\sum_{k=1}^{l-1}\int_0^\pi\int_0^\pi|D_m(x+y+\pi k)D_m(x-y-\pi k)|\frac{\sin y}{y+\pi k}dxdy\\
    &=\frac{2}{\pi}\sum_{k=1}^{l-1}\frac1k\int_0^\pi\int_0^\pi|D_m(x+y+\pi k)D_m(x-y-\pi k)|\sin y\,dxdy+Q_m,\\
  \end{split}
\end{equation}
where
$$
Q_m=2\sum_{k=1}^{l-1}\int_0^\pi\int_0^\pi|D_m(x+y+\pi k)D_m(x-y-\pi k)|\sin y\(\frac{1}{y+\pi k}-\frac1{k\pi}\)dxdy.
$$
Observe that
\begin{equation}\label{o1}
\begin{split}
     \int_0^\pi\int_0^\pi|D_m(x+y+\pi k)&D_m(x-y-\pi k)|\sin y\,dxdy\\
     &=\int_0^\pi\int_0^\pi|D_m(x+y)D_m(x-y)|\sin y\, dxdy.
\end{split}
\end{equation}
This together with the equality
\begin{equation}\label{o2}
\begin{split}
\sin y=\sin\frac{x+y}{2}\cos\frac{x-y}{2}-\sin\frac{x-y}{2}\cos\frac{x+y}{2}
\end{split}
\end{equation}
and estimate~\eqref{le2.1} implies the estimate $|Q_m|\lesssim \log m$. Thus, using again~\eqref{o1}, we derive from~\eqref{I1}:
\begin{equation}\label{I1+}
  \begin{split}
     I_1=\frac2\pi\log \frac nm \int_0^\pi\int_0^\pi|D_m(x+y)D_m(x-y)|\sin y\,dxdy+\mathcal{O}(\log m).
  \end{split}
\end{equation}
Next, applying~\eqref{o2} and the fact that
$$
\int_0^\pi\int_0^\pi\bigg|D_m(x+y)\sin\frac m2(x-y)\bigg|\cos\frac{x+y}{2}\,dxdy=0,
$$
we obtain
\begin{equation}\label{I1++}
  \begin{split}
     \int_0^\pi\int_0^\pi&|D_m(x+y)D_m(x-y)|\sin y\,dxdy\\
     &=\int_0^\pi\int_0^\pi\bigg|D_m(x-y)\sin\frac m2(x+y)\bigg|\cos\frac{x-y}{2}\,dxdy\\
     &=\|\D_m^{(2)}\|_{L([0,\pi)^2)}-2\int_0^\pi\int_0^\pi|\D_m^{(2)}(x,y)|\sin^2\frac{x-y}{4}\,dxdy\\
     &=\|\D_m^{(2)}\|_{L([0,\pi)^2)}-\int_0^\pi\int_0^\pi\bigg|\sin\frac m2(x+y)\sin\frac m2(x-y)\bigg|\tan\frac{x-y}{4}\,dxdy\\
     &=\|\D_m^{(2)}\|_{L([0,\pi)^2)}+\mathcal{O}(1).
  \end{split}
\end{equation}
Now, combining~\eqref{I1+}, \eqref{I1++} and~\eqref{leKu2+}, we get
\begin{equation}\label{I1+++}
  \begin{split}
    I_1=\frac{32}{\pi^2}\log n\log m+\mathcal{O}(\log n).
  \end{split}
\end{equation}
Consider $I_2$. Using~\eqref{S2} and estimate~\eqref{le2.1}, we derive
  \begin{equation}\label{18}
    \begin{split}
      I_2\lesssim \int_0^\pi dx \int_{[\frac nm]\pi}^{\frac nm\pi}\(|D_m(x+y)|+|D_m(x-y)|+1\)dy\lesssim \Vert D_m\Vert_{L(\T)}\lesssim \log m.
    \end{split}
  \end{equation}
  Thus, combining \eqref{16}, \eqref{I0}, \eqref{I1+++}, and~\eqref{18}, we obtain~\eqref{le5.1}.

\smallskip

Now we prove~\eqref{le5.2}. As above, we have
    \begin{equation}\label{19}
    \begin{split}
          \|(S_{mn}(x+x_\mu,y+y_\nu))\|_{\ell^{mn}}=\frac1{mn}&\sum_{\mu=0}^{m-1}\sum_{\nu=0}^{m-1}\!|\dots|+\frac1{mn}\sum_{\mu=0}^{m-1}\sum_{\nu=m}^{m[\frac nm]-1}\!|\dots|\\
          &+\frac1{mn}\sum_{\mu=0}^{m-1}\sum_{\nu=m[\frac nm]}^{n-1}\!|\dots|=J_0+J_1+J_2.
    \end{split}
  \end{equation}
Here, we suppose that $\sum_{\nu=A}^B=0$ if $A>B$.  Repeating arguments similar to those made in the proof of~\eqref{le5.1} (see equalities for $I_0$ and $I_1$) and using~\eqref{leKu1} and~\eqref{leKu2}, 
we obtain
    \begin{equation*}
    \begin{split}
  J_0+J_1&=\frac2{m^2}\sum_{A_m}\bigg|\D_m^{(1)}\Big(x+x_\mu,\frac nm\Big(y+y_\nu\Big)\Big)\bigg|\\
  &\qquad\qquad+\(\log \frac nm\)\frac{2}{\pi m^2} \sum_{A_m}\bigg|\D_m^{(2)}\Big(x+x_\mu,\frac nm\Big(y+y_\nu\Big)\Big)\bigg|+\mathcal{O}(\log m)\\
  &=\frac{2}{\pi^2}\Phi_{mn}(x,y)(2\log m\log n-\log^2 m)+\mathcal{O}(\log n).
    \end{split}
  \end{equation*}
To estimate $J_2$, we apply~\eqref{S2}, Lemma~\ref{lemz}, and~\eqref{le2.1}:
    \begin{equation}\label{22}
    \begin{split}
J_2&\lesssim \frac 1m\sum_{\nu=0}^{m-1}\sum_{\nu=m[\frac nm]}^{n-1} \frac1\nu\ \bigg(\Big|D_m\Big(x-\frac nmy+x_\mu-x_\nu\Big)\Big|\\
&\qquad\qquad\qquad\qquad\qquad\qquad\qquad+\Big|D_m\Big(x+\frac nmy+x_\mu+x_\nu\Big)\Big|+1\bigg)\\
&\lesssim \log \(\frac nm \Big[\frac nm\Big]^{-1}+1\)\|D_m\|_{L_1(\T)}\lesssim \log m.
    \end{split}
  \end{equation}
Finally, combining~\eqref{19} and~\eqref{22}, we get~\eqref{le5.2}.
\end{proof}

\section{Proofs of the main results}
\begin{proof}[Proof of Theorem~\ref{th2}]
Using the substitution $u=\cos x$, $v=\cos y$, and denoting
$$
\mathcal{D}_{mn}(x,y)=\sum_{{|k|}/{m}+{|l|}/{n}<1} e^{i(k x+l y)},
$$
we obtain
\begin{equation}\label{23}
  \begin{split}
    \Lambda_{mn}^{\rm LC}&=\max_{(x,y)\in [0,\pi]^2}\sum_{(\mu,\nu)\in \I_{m,n}} \!\!\l_{\mu,\nu}\bigg|\sum_{(i,j)\in \Gamma_{m,n}}
    4\cos ix_\mu\cos j y_\nu\cos ix\cos jy -\cos ny_\nu\cos ny\bigg|\\
    &=\max_{(x,y)\in [0,\pi]^2}\sum_{(\mu,\nu)\in \I_{m,n}} \!\!\l_{\mu,\nu} \bigg|\sum_{(i,j)\in \Gamma_{m,n}}
    4\cos ix_\mu\cos j y_\nu\cos ix\cos jy\bigg|+\mathcal{O}(1)\\
    &=\max_{(x,y)\in [0,\pi]^2}\sum_{(\mu,\nu)\in \I_{m,n}} \!\!\frac{\l_{\mu,\nu}}{4} |\mathcal{D}_{mn}(x_\mu+x,y_\nu+y)+\mathcal{D}_{mn}(x_\mu+x,y_\nu-y)\\
    &\qquad\qquad\qquad\quad\,+\mathcal{D}_{mn}(x_\mu-x,y_\nu+y)+\mathcal{D}_{mn}(x_\mu-x,y_\nu-y)|+\mathcal{O}(1)\\
    &\le\max_{(x,y)\in [0,\pi]^2}\sum_{(\mu,\nu)\in \I_{m,n}} \!\!\l_{\mu,\nu} |\mathcal{D}_{mn}(x_\mu+x,y_\nu+y)|+\mathcal{O}(1).
  \end{split}
\end{equation}
Denote by ${\rm LC}_{mn}'$ a subset of ${\rm LC}_{mn}$, which consists of vertex and edge points of $[-1,1]^2$. It was shown in~\cite{E} that
$$
{\rm LC}_{mn}'=\left\{(u_i,v_j): \begin{array}{ccc}
                                              i=1,\dots,m &  \\
                                              j\in \{0,n\} & \\
                                              i+j=0\pmod 2 &
                                            \end{array}
  \right\}\cup
 \left\{(u_i,v_j): \begin{array}{ccc}
                                              i\in \{0,m\} &  \\
                                              j=1,\dots,n-1 & \\
                                              i+j=0\pmod 2 &
                                            \end{array}
  \right\}.
  $$
Taking into account this equality, using Lemma~\ref{lemz}, and denoting
$$
\|(a_{\nu,\mu})\|_{\widetilde{\ell}^{mn}}=\frac1{mn}\sum_{{\nu,\mu}\in \mathcal{I}_{mn}}|a_{\nu,\mu}|,
$$
we get from~\eqref{23} that
\begin{equation}\label{23+}
  \begin{split}
    \Lambda_{mn}^{\rm LC}
    &\le\max_{(x,y)\in [0,\pi]^2}2\|\(\mathcal{D}_{mn}(x_\mu+x,y_\nu+y)\)\|_{\widetilde{\ell}^{mn}}\\
    &\qquad\qquad\qquad\qquad+\mathcal{O}\(m^{-1}\|\mathcal{D}_n\|_{L(\T)}+n^{-1}\|\mathcal{D}_m\|_{L(\T)}+1\)\\
    &=\max_{(x,y)\in [0,\pi]^2}2\|\(\mathcal{D}_{mn}(x_\mu+x,y_\nu+y)\)\|_{\widetilde{\ell}^{mn}}+\mathcal{O}(\log n).
  \end{split}
\end{equation}
Next, we have
\begin{equation}\label{24}
  \begin{split}
     \mathcal{D}_{mn}(x,y)=D_{mn}(x,y)-d_{mn}(x,y),
  \end{split}
\end{equation}
where
\begin{equation*}
  d_{mn}(x,y)=\sum_{{|k|}/{m}+{|l|}/{n}=1} e^{i(k x+l y)}=2\sum_{k=-m}^m e^{ik x}\cos\(\left\{-\frac{n}{m}|k|\right\}\).   
\end{equation*}
Repeating the proof of Lemma~\ref{le3}, it is not difficult to show that
\begin{equation}\label{26}
  \sup_{y\in \T}\|d_{mn}(\cdot,y)\|_{L_1(\T)}\lesssim p \log \frac m p.
\end{equation}
Thus, using~\eqref{24} and~\eqref{26} as well as an analogue of~\eqref{le4.2+} with the norm $\widetilde{\ell}_{mn}$ instead of ${\ell}_{mn}$ (to obtain the required equality, it suffices to repeat line by line the proof of~\eqref{le4.2+}), we derive
\begin{equation}\label{27}
\begin{split}
   \|\(\mathcal{D}_{mn}(x_\mu+x,y_\nu+y)\)\|_{\widetilde{\ell}^{mn}}&=\|\({D}_{mn}(x_\mu+x,y_\nu+y)\)\|_{\widetilde{\ell}^{mn}}
   +\mathcal{O}\(\log n+p \log \frac{m}{p}\)\\
   &=\|\(S_{mn}(x_\mu+x,y_\nu+y)\)\|_{\widetilde{\ell}^{mn}}+\mathcal{O}\(\log n+p \log \frac{m}{p}\).
\end{split}
\end{equation}
Next, by the same arguments as in the proof of Lemma~\ref{le5}, using~\eqref{leKu3} and~\eqref{leKu4}, we get
    \begin{equation}\label{28}
    \begin{split}
  \|\,(S_{mn}(x_\mu+x,&y_\nu+y))\,\|_{\widetilde{\ell}^{mn}}=\frac2{m^2}\sum_{B_m}\Big|\D_m^{(1)}(x+x_\mu,\frac nm(y+y_\nu))\Big|\\
  &+\(\log \frac nm\)\frac{2}{\pi m^2} \sum_{B_m}\Big|\D_m^{(2)}(x+x_\mu,\frac nm(y+y_\nu))\Big|+\mathcal{O}(\log m)\\
&=\frac{2}{\pi^2}\Big|\sin{\frac{mx+ny}{2}}\sin\frac{mx-ny}{2}\Big|(2\log m\log n-\log^2 m)+\mathcal{O}(\log n).
\end{split}
  \end{equation}
Now, combining~\eqref{23}, \eqref{27}, and~\eqref{28}, we obtain
    \begin{equation}\label{29}
    \begin{split}
\Lambda_{mn}^{\rm LC}\le\frac{4}{\pi^2}(2\log m\log n-\log^2 m)+\mathcal{O}\(\log n+p \log \frac{m}{p}\).
\end{split}
  \end{equation}
At the same time, it follows from~\eqref{23} and~\eqref{23+} that
\begin{equation*}
\begin{split}
\Lambda_{mn}^{\rm LC}\ge 2\|\{\mathcal{D}_{mn}(x_{\mu+1},y_\nu)\}\|_{\widetilde{\ell}^{mn}}+\mathcal{O}(\log n).
\end{split}
\end{equation*}
Thus, repeating the above arguments with $(x,y)=(\pi/m,0)$, we get
\begin{equation}\label{31}
\begin{split}
\Lambda_{mn}^{\rm LC}\ge \frac{4}{\pi^2}(2\log m\log n-\log^2 m)+\mathcal{O}\(\log n+p \log \frac{m}{p}\).
\end{split}
\end{equation}

Finally, combining~\eqref{29} and~\eqref{31}, we prove the theorem.
\end{proof}

\begin{proof}[Proof of Theorem~\ref{th1}]
By~\eqref{zv}, we  have $\mathcal{L}_{mn}=\frac1{\pi^2}\|D_{mn}\|_{L([0,\pi)^2)}$.  Using this equality together with~\eqref{le4.1} and~\eqref{le5.1}, we get~\eqref{th1.1}.
Similarly, applying~\eqref{le4.1+} and~\eqref{le5.1}, we prove~\eqref{th1.1c}.
\end{proof}

\begin{proof}[Proof of Theorem~\ref{th1+}]
As in the previous theorem, equality~\eqref{th1.2} follows directly from~\eqref{le4.2} and~\eqref{le5.2}. Here, we use additionally the fact that the Lebesgue function $L_{mn}(x,y)$ is periodic in $x$ with period $\pi/m$ and in $y$ with period $\pi/n$.
Similarly, using~\eqref{le4.2+} and~\eqref{le5.2}, we get~\eqref{th1.2c}.
\end{proof}

%
%
%

%
%
%
%
%


\begin{thebibliography}{99}
{




\bibitem{Be} E.S. Belinsky, {Behavior of the Lebesgue constants of certain methods of summation of
multiple Fourier series}, Metric Questions of the Theory of Functions and Mappings, Naukova Dumka, Kiev, 1977, 19--39 (Russian).

\bibitem{BDeMVXu2006} L. Bos, M. Caliari, S. De~Marchi, M. Vianello, Y. Xu,
{Bivariate Lagrange interpolation at the Padua points: the generating
curve approach}, J. Approx. Theory \textbf{143} (2006), no.~1, 15--25.

\bibitem{BDeMV2006} L. Bos, S. De Marchi, M. Vianello, {On the Lebesgue constant for the Xu interpolation
formula,} J. Approx. Theory \textbf{141} (2006), no.~2, 134--141.

\bibitem{CMV} M. Caliari, S. De Marchi, M. Vianello, {Bivariate polynomial interpolation on the
square at new nodal sets}, Appl. Math. Comput. \textbf{165} (2005), no.~2, 261--274.



\bibitem{DE} P. Dencker, W. Erb, Multivariate polynomial interpolation on Lissajous--Chebyshev nodes, J. Approx. Theory \textbf{219} (2017), 15--45.

\bibitem{DEKL} P. Dencker, W. Erb, Yu. Kolomoitsev, T. Lomako, {Lebesgue constants for polyhedral sets and polynomial
interpolation on Lissajous-Chebyshev nodes}, J. of Complexity \textbf{43} (2017), 1--27.


%

\bibitem{E} W. Erb, {Bivariate Lagrange interpolation at the node points of Lissajous curves - the degenerate case}, Appl. Math. Comput. \textbf{289} (2016), 409--425.

    \bibitem{ErbKaethnerAhlborgBuzug2015}
W. Erb, C. Kaethner, M. Ahlborg, T.M. Buzug, Bivariate Lagrange interpolation at the node points of
non-degenerate Lissajous curves, Numer. Math. \textbf{133} (2016), no.~1, 685--705.

\bibitem{GaLi} M. Ganzburg, E. Liflyand, {The Lebesgue Constants
of Fourier Partial Sums}, Topics in Classical and Modern Analysis, Birkh\"auser, 2019, 147--158.

\bibitem{GR} I.S. Gradshteyn,  I.M. Ryzhik,  Table of Integrals, Series and Products. Seventh Edition. Elsevier Academic Press, New York 2007.

\bibitem{KB} T. Knopp, T.M. Buzug, Magnetic Particle Imaging: An Introduction to Imaging Principles and Scanner Instrumentation,
Springer, 2012.

\bibitem{KLi}  Yu. Kolomoitsev, E. Liflyand, {Asymptotics of the Lebesgue constants for a $d$-dimensional simplex},  arXiv:2004.12236.




\bibitem{KL} Yu. Kolomoitsev, T. Lomako, {On the growth of Lebesgue constants for convex polyhedra}, Trans. Amer. Math. Soc. \textbf{370} (2018), 6909--6932.

\bibitem{KLP} Yu. Kolomoitsev, T. Lomako, J. Prestin, {On $L_p$-error of bivariate polynomial interpolation on the square}, J. Approx. Theory \textbf{229} (2018), 13--35.

\bibitem{KS}  S.V.  Konyagin, M.A. Skopina, {Comparison of the $L^1$-norms of total and truncated exponential sums},
Mat. Zametki \textbf{69} (2001), 699--707 (Russian);  English translation in  Math. Notes \textbf{69} (2001), 644--651.

\bibitem{Ku} O.I. Kuznetsova, {The asymptotic behavior of the Lebesgue constants for a sequence of
triangular partial sums of double Fourier series}, Sib. Mat. Zh. \textbf{XVIII} (1977),
629--636 (Russian); English translation in Siberian Math. J. \textbf{18} (1977), 449--454.

\bibitem{Ku2}  O.I. Kuznetsova, {Asymptotic behavior of Lebesgue functions of two variables}, Ukr. Math. J.
\textbf{47} (1995), no. 2, 259--266.

\bibitem{L} E.R. Liflyand, {Lebesgue Constants of multiple Fourier series}, Online J. Anal. Comb. \textbf{1}, no. 5 (2006), 1--112.

\bibitem{MM} G. Mastroianni, G.V. Milovanovi\'c, Interpolation Processes, Basic Theory and Applications, Springer, Berlin, 2008.

\bibitem{NP} F. Nazarov, A. Podkorytov, {On the behavior of the Lebesgue constants for two dimensional
Fourier sums over polygons}, Algebra i Analiz \textbf{7} (1995), 214--238 (Russian); English translation in St.-Petersburg Math.~J. \textbf{7} (1995), 663--680.


\bibitem{P2} A.N. Podkorytov, {On asymptotics of Dirichlet's kernels of Fourier sums with respect to a polygon},
Investigations on linear operators and function theory. Part XV, Zap. Nauchn. Sem. LOMI \textbf{149}, Nauka, Leningrad. Otdel., Leningrad (1986), 142--149.

\bibitem{S} M.A. Skopina, {Lebesgue constants of multiple polyhedron sums of de la Vallee–Poussin}, Analytical
theory of numbers and theory of functions. Part 5, Zap. Nauchn. Sem. LOMI \textbf{125}, Nauka, Leningrad. Otdel., Leningrad, 1983, 154--165.

\bibitem{Sm} S.J. Smith,  {Lebesgue constants in polynomial interpolation}, Annales Math. Inf. \textbf{33} (2006), 109--123.

\bibitem{Su} Yu.N. Subbotin, {The Lebesgue constants of certain m-dimensional interpolation polynomials}, Math. USSR Sb. \textbf{46} (4) (1983), 561--570.

\bibitem{SV} L. Szili, P. V\'ertesi, {On multivariate projection operators}, J. Approx. Theory {\bf 159} (2009), 154--164.

\bibitem{TB} R.M. Trigub, E.S. Belinsky, {Fourier Analysis and Appoximation of Functions}, Kluwer, 2004.


\bibitem{Z} A. Zygmund, Trigonometric Series.  Cambridge University Press, 1968.


\bibitem{Xu1996} Y. Xu, {Lagrange interpolation on Chebyshev points of two variables},
 J. Approx. Theory \textbf{87} (1996), no.~2, 220--238.

}

\end{thebibliography}
\end{document}